\newtheorem{theorem}{Theorem}[section]
\newtheorem{definition}[theorem]{Definition}
\newtheorem{lemma}[theorem]{Lemma}
\newtheorem{example}[theorem]{Example}
\newtheorem{remark}[theorem]{Remark}
\def\defineTColor#1#2{%
 \newpsstyle{#1}{%
  fillstyle=vlines,hatchcolor=#2,
  hatchwidth=0.1\pslinewidth,
  hatchsep=1\pslinewidth}%
  }
\newcommand{\norm}[1]{\left\|#1\right\|}  
\newcommand{\sn}{\mathop{\delta}\limits^{\doublewedge}}
\newcommand{\notsn}{\mathop{\not\delta}\limits^{\doublewedge}}
\newcommand{\snd}{\mathop{\delta_{_{\Phi}}}\limits^{\doublewedge}} 
\newcommand{\notsnd}{\mathop{\not\delta_{_{\Phi}}}\limits^{\doublewedge}} 
\newcommand{\sndatlases}{\mathop{\delta} \limits^{\doublewedge}_{_{\hat{\mathcal{A}},\Phi}}} 
\newcommand{\CL}{\mbox{CL}}
\newcommand{\cl}{\mbox{cl}}
\newcommand{\Int}{\mbox{int}}
\begin{document}

\title[Strong Proximities on Smooth Manifolds and Vorono\" i Diagrams]{Strong Proximities on Smooth Manifolds and Vorono\" i Diagrams}

\author[J.F. Peters]{J.F. Peters$^{\alpha}$}
\email{James.Peters3@umanitoba.ca, cguadagni@unisa.it}
\address{\llap{$^{\alpha}$\,}Computational Intelligence Laboratory,
University of Manitoba, WPG, MB, R3T 5V6, Canada and
Department of Mathematics, Faculty of Arts and Sciences, Ad\i yaman University, 02040 Ad\i yaman, Turkey}
\author[C. Guadagni]{C. Guadagni$^{\beta}$}
\address{\llap{$^{\beta}$\,}Computational Intelligence Laboratory,
University of Manitoba, WPG, MB, R3T 5V6, Canada and
Department of Mathematics, University of Salerno, via Giovanni Paolo II 132, 84084 Fisciano, Salerno , Italy}
\thanks{The research has been supported by the Natural Sciences \&
Engineering Research Council of Canada (NSERC) discovery grant 185986.}

\subjclass[2010]{Primary 54E05 (Proximity); Secondary 57N16 (Topological Manifolds)}

\date{}

\dedicatory{Dedicated to the Memory of Som Naimpally}

\begin{abstract}
This article introduces strongly near smooth manifolds.  The main results are  (i)  second countability of the strongly hit and far-miss topology on a family $\mathcal{B}$ of subsets  on the Lodato proximity space of regular open sets to which singletons are added, (ii) manifold strong proximity, (iii) strong proximity of charts in manifold atlases implies that the charts have nonempty intersection.  The application of these results is given in terms of the nearness of atlases and charts of proximal manifolds and what are known as Vorono\" i manifolds.
\end{abstract}

\keywords{Strong Proximity, Second countability, Smooth Manifolds, Vorono\" i Diagrams}

\maketitle

\section{Introduction}
This article carries forward recent work on strong proximities~\cite{Peters2015VoronoiAMSJ,Peters2015visibility,PetersGuadagni2015stronglyNear,PetersGuadagni2015connectedness} and their applications~\cite{Hettiarachchi2015PatRec,Peters2015KBS}, which is a direct result of work on proximity~\cite{Beer2015Som,Beer1993hit,DiConcilio2009,DiConcilio2000SetOpen,DiConcilio2000PartialMaps,DiMaio1995hypertop,Lodato1962,Naimpally1970,Naimpally2002,Naimpally2009,Naimpally2013,Peters2016CP}.  Applications of the results in this paper are given in terms of the atlases and charts of proximal manifolds and what are known as Vorono\" i manifolds, which reflect recent work on manifolds~\cite{Hettiarachchi2015PatRec,Peters2014manifolds}.

\section{Preliminaries}
The concept of \emph{strong proximity} is characterized by a relation giving information about pairs of sets that share points.  Such proximities are not the usual proximities. In fact, in the traditional sense, proximal sets do not always have points in common. Actually, the name \emph{strong proximity} signals a strong kind of nearness between sets with points in common.  

\begin{definition}
Let $X$ be a topological space, $A, B, C \subset X$ and $x \in X$.  The relation $\sn$ on $\mathscr{P}(X)$ is a \emph{strong proximity}, provided it satisfies the following axioms.
\begin{enumerate}
\item[(N0)] $\emptyset \not\sn A, \forall A \subset X $, and \ $X \sn A, \forall A \subset X$
\item[(N1)] $A \sn B \Leftrightarrow B \sn A$
\item[(N2)] $A \sn B \Rightarrow A \cap B \neq \emptyset$
\item[(N3)] If $\{B_i\}_{i \in I}$ is an arbitrary family of subsets of $X$ and  $A \sn B_{i^*}$ for some $i^* \in I \ $ such that $\Int(B_{i^*})\neq \emptyset$, then $  \ A \sn (\bigcup_{i \in I} B_i)$
\item[(N4)] $\mbox{int}A \cap \mbox{int} B \neq \emptyset \Rightarrow A \sn B$ \qquad \textcolor{blue}{$\blacksquare$}
\end{enumerate}
\end{definition}

\noindent When we write $A \sn B$, we read $A$ is \emph{strongly near} $B$.  The notation $A\ \notsn\ B$ reads $A$ is not strongly near $B$.
For each \emph{strong proximity}, we assume the following relations:
\begin{enumerate}
\item[(N5)] $x \in \Int (A) \Rightarrow x \sn A$ 
\item[(N6)] $\{x\} \sn \{y\} \Leftrightarrow x=y$  \qquad \textcolor{blue}{$\blacksquare$} 
\end{enumerate}
So, for example, if we take the strong proximity related to non-empty intersection of interiors, we have that $A \sn B \Leftrightarrow \Int A \cap \Int B \neq \emptyset$ or either $A$ or $B$ is equal to $X$, provided $A$ and $B$ are not singletons; if $A = \{x\}$, then $x \in \Int(B)$, and if $B$ too is a singleton, then $x=y$. It turns out that if $A \subset X$ is an open set, then each point that belongs to $A$ is strongly near $A$.

Related to this new kind of nearness introduced in~\cite{PetersGuadagni2015stronglyNear} which extends traditional proximity (see, {\em e.g.},~\cite{Naimpally1970,Lodato1962,Lodato1964,Lodato1966,Naimpally2013,Peters2012notices}), we defined a new kind of \emph{hit-and-miss hypertopology}, \cite{PetersGuadagni2015stronglyNear, PetersGuadagni2015hypertopologies}, which extends recent work on hypertopologies (see, {\em e.g.},~\cite{Beer1993hit,DiConcilio1989,DiConcilio2013action,DiMaio2008hypertop,DiMaio1995hypertop,DiMaio1992hypertop,Som2006hypertopology,Guadagni2015,Naimpally2002}).  The important thing to notice is that this work has its foundation in geometry~\cite{Guadagni2015,Peters2015VoronoiAMSJ,Peters2015visibility}.

The \emph{strongly hit and far-miss} topology $\tau^\doublewedge_\mathscr{B}$ associated to $\mathscr{B}$ has as subbase the sets of the form:
\begin{enumerate}
\item $V^{\doublewedge} = \{E \in \CL(X): E \sn V \}$, where $V$ is an open subset of $X$,
\item $A^{++} =  \{ \ E \in \CL(X) : E \not\delta X\setminus  A  \ \}$, where $A$ is an open subset of $X$ and $X \setminus A \in \mathscr{B}$.
\end{enumerate} 

In the definition of $A^{++}$, $\delta$ represents a Lodato proximity.
\begin{definition} 
Let $X$ be a nonempty set. A \textit{Lodato proximity $\delta$} is a relation on $\mathscr{P}(X)$, which satisfies the following properties for all subsets $A, B, C $ of $X$:
\begin{enumerate}
\item[(P0)] $A\ \delta\ B \Rightarrow B\ \delta\ A$
\item[(P1)] $A\ \delta\ B \Rightarrow A \neq \emptyset $ and $B \neq \emptyset $
\item[(P2)] $A \cap B \neq \emptyset \Rightarrow  A\ \delta\ B$
\item[(P3)] $A\ \delta\ (B \cup C) \Leftrightarrow A\ \delta\ B $ or $A\ \delta\ C$
\item[(P4)] $A\ \delta\ B$ and $\{b\}\ \delta\ C$ for each $b \in B \ \Rightarrow A\ \delta\ C$
\end{enumerate}
Further $\delta$ is \textit{separated }, if 
\begin{enumerate}
\item[(P5)] $\{x\}\ \delta\ \{y\} \Rightarrow x = y$. \qquad \textcolor{blue}{$\blacksquare$}
\end{enumerate}
\end{definition}

\noindent $A\ \delta\ B$ reads "$A$ is near to $B$" and $A \not \delta B$ reads "$A$ is far from $B$".
\textit{Lodato proximity} or \textit{LO-proximity} is one of the simplest proximities. We can associate a topology with the space $(X, \delta)$ by considering as closed sets those sets that coincide with their own closure where. For a subset $A$, we have
\[
\mbox{cl} A = \{ x \in X: x\ \delta\ A\}.
\]
Any proximity $\delta$ on $X$ induces a binary relation over the powerset exp $X,$ usually denoted as $\ll_\delta $ and  named  the  {\it   natural strong inclusion associated with} $\delta$, by declaring that $ A$ is {\it strongly included} in $B, \ A \ll_{\delta} B$, when $A$ is far from the complement of $B,  \ A \not\delta X \setminus B .$

In a recent paper (\cite{PetersGuadagni2015hypertopologies}), we looked at the Hausdorffness of the hypertopology $\tau^\doublewedge_\mathscr{B}$.  Here, the focus is on second countability.

Moreover, we want to point out the real possibility to use this concepts in applications. For this reason we look at some kinds of descriptive strong proximities and strongly proximal Vorono\" i regions. 

\section{Second Countability of Strong Proximity Topology}

As for the Hausdorff property of $\tau^\doublewedge$, we concentrate our attention on the class of regular closed sets, $\mbox{RCL}(X)$.
Recall that a set $F$ is \emph{regular closed} if $F = \cl{(\Int F)}$, that is $F$ coincides with the closure of its interior.  A well-known fact is that regular closed sets form a complete Boolean lattice \cite{Ronse1990}. Moreover there is a one-to-one correspondence between regular open ($\mbox{RO}(X)$) and regular closed sets. We have a \emph{regular open} set $A$ when $A = \Int{(\cl A)}$, that is $A$ is the interior of its closure. The correspondence between the two mentioned classes is given by $c: \mbox{RO}(X) \rightarrow \mbox{RCL}(X)$, where $c(A)= \cl(A)$, and $o: \mbox{RCL}(X) \rightarrow \mbox{RO}(X)$, where $o(F) = \Int (F)$. By this correspondence it is possible to prove that also the family of regular open sets is a complete Boolean lattice. Furthermore it is shown that every complete Boolean lattice is isomorphic to the complete lattice of regular open sets in a suitable topology.\\
The importance of these families is also due to the possibility of using them for digital images processing, because they allow to satisfy certain common-sense physical requirements.\\

Consider now $\tau^\doublewedge_\mathscr{B}$, the \emph{strongly hit and far-miss} topology associated to a family  $\mathscr{B}$ of subsets of $X$, on the space of regular closed sets to which singletons are added, $\mbox{RCL}^*(X) = \mbox{RCL}(X) \cup \{ \{x\}:  x \in X \}$:

\begin{itemize}
\item $V^{\doublewedge} = \{E \in \mbox{RCL}^*(X): E \sn V \}$, where $V$ is a regular open subset of $X$,
\item $A^{++} =  \{ \ E \in \mbox{RCL}^*(X) : E \not\delta X\setminus  A  \ \}$, where $A$ is a regular open subset of $X$ and $X \setminus A \in \mathscr{B}$.
\end{itemize} 

The following theorem is a generalization of classical results holding for hit and miss hypertopologies, \cite{DiMaio1995hypertop, Zsilinszki}. In \cite{Zsilinszki}, L. Zsilinszki considers spaces that are \emph{weakly $R_0$}, {\em i.e.}, every nonempty difference of open sets contains a non-empty closed subset of $X$. We will use an analogous property that holds for regular open and regular closed sets.

\begin{definition}
We say that a topological space $X$ endowed with a compatible Lodato proximity $\delta$ is regularly weakly $R_0$, if and only if every nonempty difference of regular open sets  proximally contains a nonempty regular closed subset of $X$, that is 
\[ \forall A, B \in \mbox{RO}(X), \exists C \in \mbox{RCL}(X): C \ll_\delta (A \setminus B) . \]
\end{definition}

By $\Sigma(\mathscr{B})$ we indicate the set of all finite unions of members of $\mathscr{B}$.

\begin{theorem}\label{2count}
Let $X$ be a $T_1$, regularly weakly $R_0$ topological space, $\delta$ a compatible Lodato proximity on $X$, and $\sn$ a strong proximity on $X$. Then the following are equivalent:
\begin{itemize}
\item[i)] $( \mbox{RCL}^*(X), \tau^\doublewedge_\mathscr{B} )$ is second countable;
\item[ii)] $X$ is second countable and there exists a countable subfamily $\mathscr{B}' \subset \mathscr{B}$ such that for each $B \in \mathscr{B}$ and $A, B \in \mbox{RCL}^*(X)$ with $A \not\delta B$, then   $B \subset D \ll_\delta X \setminus A$ for some $D \in \Sigma(\mathscr{B'})$.
\end{itemize}
\end{theorem}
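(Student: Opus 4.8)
The plan is to prove the two implications separately. Throughout I will use that finite intersections of the subbasic sets form a base for $\tau^\doublewedge_\mathscr{B}$, and that the far-miss sets collapse under intersection: by additivity (P3) of $\delta$ one has $A_1^{++}\cap\cdots\cap A_n^{++}=\{E:E\not\delta\bigcup_i(X\setminus A_i)\}$, where $\bigcup_i(X\setminus A_i)\in\Sigma(\mathscr{B})$ is again regular closed (a finite union of regular closed sets being regular closed). Hence every basic open set can be written in the normal form $(X\setminus D)^{++}\cap V_1^{\doublewedge}\cap\cdots\cap V_k^{\doublewedge}$ with $D\in\Sigma(\mathscr{B})$ and each $V_i$ regular open, and such a miss-factor is open since $(X\setminus D)^{++}=\bigcap_j(X\setminus B_j)^{++}$ for $D=\bigcup_j B_j$. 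Two monotonicities will be used repeatedly: if $W\subseteq V$ are regular open and $W\neq\emptyset$, then (N3) gives $E\sn W\Rightarrow E\sn V$, so $W^{\doublewedge}\subseteq V^{\doublewedge}$; dually $D\subseteq D'$ forces $(X\setminus D')^{++}\subseteq(X\setminus D)^{++}$. Finally, on the singleton copy of $X$ one has $\{x\}\sn V\Leftrightarrow x\in V$ (from (N5) and (N2)) and, using $T_1$ and compatibility of $\delta$, $\{x\}\in A^{++}\Leftrightarrow x\in A$ for regular open $V,A$.

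For (ii) $\Rightarrow$ (i) I will build an explicit countable base. Fix a countable base $\{U_n\}$ of $X$ and set $\mathcal{V}=\{\Int\cl U_n:n\in\mathbb{N}\}$, a countable family of regular open sets. The hit sets refine over $\mathcal{V}$: for $E\in\mbox{RCL}^*(X)$ with $E\sn V$, (N2) gives a point of $E\cap V$, and since $E=\cl\Int E$ (or via (N5) for a singleton) one finds $p\in\Int E\cap V$ and a basic $U_n$ with $p\in U_n\subseteq\Int E\cap V$; then $W=\Int\cl U_n\in\mathcal{V}$ satisfies $W\subseteq V$ and $\Int E\cap\Int W\neq\emptyset$, so $E\sn W$ by (N4), while (N3) gives $W^{\doublewedge}\subseteq V^{\doublewedge}$. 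I then claim that
\[
\mathcal{C}=\bigl\{(X\setminus D)^{++}\cap W_1^{\doublewedge}\cap\cdots\cap W_k^{\doublewedge}:D\in\Sigma(\mathscr{B}'),\ W_i\in\mathcal{V},\ k\in\mathbb{N}\bigr\}
\]
is a countable base. Given $E$ in a basic set with miss-parameter $D_0=B_1\cup\cdots\cup B_r\in\Sigma(\mathscr{B})$, the hit coordinates shrink into $\mathcal{V}$ as above; for the miss coordinate, $E\not\delta D_0$ gives $E\not\delta B_j$ for each $j$ by (P3), so hypothesis (ii) applied to the pair $E,B_j$ yields $D_j\in\Sigma(\mathscr{B}')$ with $B_j\subseteq D_j$ and $D_j\not\delta E$. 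Setting $D=\bigcup_j D_j\in\Sigma(\mathscr{B}')$ gives $D_0\subseteq D$ and $E\not\delta D$, whence $E\in(X\setminus D)^{++}\subseteq(X\setminus D_0)^{++}$, inserting a member of $\mathcal{C}$ between $E$ and the given set.

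For (i) $\Rightarrow$ (ii) let $\mathcal{G}=\{G_n\}$ be a countable base of $(\mbox{RCL}^*(X),\tau^\doublewedge_\mathscr{B})$. Restricting $\mathcal{G}$ to the singleton subspace and using the singleton identities identifies the trace topology with the regular-open (semiregular) topology of $X$; heredity of second countability then produces a countable base of regular open sets, giving the second countability of $X$. I would put each $G_n$ in normal form with miss-parameter $D_n=\bigcup_l B_{n,l}$, $B_{n,l}\in\mathscr{B}$ (each regular closed, being the complement of a regular open), and set $\mathscr{B}'=\{B_{n,l}:n,l\}$, a countable subfamily. To verify the property, take $B\in\mathscr{B}$ and $A,B\in\mbox{RCL}^*(X)$ with $A\not\delta B$. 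Then $A\in(X\setminus B)^{++}$, so some $G_n$ satisfies $A\in G_n\subseteq(X\setminus B)^{++}$; its miss-parameter $D:=D_n\in\Sigma(\mathscr{B}')$ already gives $A\not\delta D$, i.e. $D\ll_\delta X\setminus A$, and it remains to show $B\subseteq D$. If not, then $\Int B\setminus\Int D\neq\emptyset$, and the regularly weakly $R_0$ property plants a nonempty regular closed $C_0\ll_\delta(\Int B\setminus\Int D)$; since $\delta$ is Lodato, (P4) gives $S\not\delta\Int D\Rightarrow S\not\delta\cl\Int D=D$, so $C_0\not\delta D$, while $C_0\subseteq\Int B$ forces $C_0\ \delta\ B$. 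Setting $C=A\cup C_0$ (regular closed), (N3) propagates $A\sn V_i$ to $C\sn V_i$ on the hit coordinates of $G_n$, and $A\not\delta D$ with $C_0\not\delta D$ gives $C\not\delta D$ by (P3); thus $C\in G_n$, yet $C\ \delta\ B$ contradicts $G_n\subseteq(X\setminus B)^{++}$. Hence $B\subseteq D\ll_\delta X\setminus A$, as required.

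The hard part will be this last verification, precisely the claim $B\subseteq D_n$. Its difficulty is that the only open neighbourhoods of $A$ available are the prescribed base elements $G_n$, whose hit coordinates must be honoured while one simultaneously forces a regular closed test set into the putative gap $B\setminus D_n$ and keeps it proximally far from $D_n$; the device $C=A\cup C_0$ is what reconciles these demands, and the regularly weakly $R_0$ hypothesis is exactly what guarantees $C_0$ exists. A secondary point needing care is the treatment of the adjoined singletons of $\mbox{RCL}^*(X)$, where $T_1$ keeps the singleton identities and the (P4)-closure identity $\cl\Int D=D$ valid.
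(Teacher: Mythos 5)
Your proof is correct and follows essentially the same route as the paper's: the forward direction reproduces, inlined and specialized, the content of the paper's Lemma~\ref{lemma1} (the regularly weakly $R_0$ device of planting a regular closed set $C_0$ in the gap $B\setminus D$ and testing with $A\cup C_0$), and the converse direction builds the same countable base from regular-open hulls of a countable base of $X$ together with the miss-parameters supplied by (ii). Your only departures are minor refinements --- applying (ii) memberwise to the pieces of $D_0\in\Sigma(\mathscr{B})$ and taking unions, and spelling out the $(P4)$ step $C_0\not\delta\Int D\Rightarrow C_0\not\delta D$ --- which tighten points the paper leaves implicit.
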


To prove Theorem~\ref{2count}, we need the following lemma.

\begin{lemma}\label{lemma1}
Let $X$ be a $T_1$ regularly weakly $R_0$ space, $U_1,...,U_n$, $V_1,..., V_m$ ($n,m \in \mathbb{N}$) regular open subsets of $X$, $B$ and $D$ regular closed sets belonging to $\Sigma( \mathscr{B})$. Then the following are equivalent:
\begin{itemize}
\item[a)] $\mathscr{U}= (\bigcap_{i=1}^n {U_i}^{\doublewedge}) \cap (X \setminus B)^{++} \subset \mathscr{V}= (\bigcap_{j=1}^m {V_i}^{\doublewedge}) \cap (X \setminus D)^{++} $
\item[b)] $X \setminus B \subset X \setminus D$ and for each $j \in \{1,..,m \}$ there exists $i \in \{1,..,n \}$ such that $U_i \cap (X \setminus B) \subset V_j \cap (X \setminus D)$
\end{itemize}
\end{lemma}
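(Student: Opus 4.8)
The plan is to prove the two implications separately, leaning throughout on the monotonicity of $\sn$ supplied by (N3) (namely, $A\sn C$ with $\Int C\neq\emptyset$ and $C\subset C'$ force $A\sn C'$, by applying (N3) to the family $\{C,C'\}$), on the monotonicity of $\delta$ supplied by (P3), and on two structural facts: a finite union of regular closed sets is again regular closed, and an open set contained in $\cl V$ is already contained in $V$ whenever $V$ is regular open. I may assume $\mathscr{U}\neq\emptyset$, since otherwise (a) is vacuous; note that $\mathscr{U}\neq\emptyset$ already forces $U_i\cap(X\setminus B)\neq\emptyset$ for every $i$, because any $E\in\mathscr{U}$ satisfies $E\sn U_i$ (so $E\cap U_i\neq\emptyset$ by (N2)) together with $E\not\delta B$ (so $E\cap B=\emptyset$ by (P2)).

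For $(b)\Rightarrow(a)$ I would take $E\in\mathscr{U}$ and verify $E\in\mathscr{V}$. The miss condition is immediate: from $X\setminus B\subset X\setminus D$ we get $D\subset B$, whence $E\not\delta B$ yields $E\not\delta D$ by (P3). For each hit condition $E\sn V_j$ I would choose the index $i$ provided by (b), so that $U_i\cap(X\setminus B)\subset V_j\cap(X\setminus D)\subset V_j$, and then argue that the strong contact of $E$ with $U_i$ must occur outside $B$: since $E\cap B=\emptyset$, a witnessing point of the contact of $E$ with $U_i$ lies in the regular open set $U_i\cap(X\setminus B)$, so that $E\sn\bigl(U_i\cap(X\setminus B)\bigr)$, and monotonicity of $\sn$ upgrades this to $E\sn V_j$. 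The delicate point here is passing from $E\sn U_i$ to strong contact localized inside $U_i\cap(X\setminus B)$ for a general strong proximity; I would handle it through the compatibility of $\sn$ with $\delta$ together with (N2) and (N4).

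The substantive direction is $(a)\Rightarrow(b)$, which I would prove by contraposition, building explicit witnesses in $\mathscr{U}\setminus\mathscr{V}$. To show $D\subset B$, suppose $D\not\subset B$; then $o(D)\cap(X\setminus B)$ is a nonempty regular open set, and the regularly weakly $R_0$ hypothesis supplies a nonempty $C_0\in\mbox{RCL}(X)$ with $C_0\ll_\delta o(D)\cap(X\setminus B)$, so that $C_0\subset D$ (hence $C_0\,\delta\,D$) and $C_0\not\delta B$. Adjoining, for each $i$, a set $C_i\in\mbox{RCL}(X)$ with $C_i\ll_\delta U_i\cap(X\setminus B)$ (again via regularly weakly $R_0$), the finite union $E=C_0\cup C_1\cup\dots\cup C_n$ is regular closed and lies in $\mathscr{U}$, since each $C_i\sn U_i$ by (N4) forces $E\sn U_i$ by (N3) while each piece is $\not\delta B$, so $E\not\delta B$ by (P3); yet $E\,\delta\,D$, contradicting $\mathscr{U}\subset\mathscr{V}$. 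To obtain the hit refinement, I would fix $j$ with $U_i\cap(X\setminus B)\not\subset V_j$ for all $i$ (legitimate once $D\subset B$ reduces the target inclusion to $U_i\cap(X\setminus B)\subset V_j$), observe that each $U_i\cap(X\setminus B)\cap(X\setminus\cl V_j)$ is a nonempty regular open set (using the regular-open fact above), and repeat the construction to get $E=\bigcup_i C_i\in\mathscr{U}$ with $E\cap V_j=\emptyset$; then $E\,\notsn\,V_j$ by the contrapositive of (N2), so $E\notin\mathscr{V}$, again a contradiction.

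The main obstacle I anticipate is precisely the witness construction in $(a)\Rightarrow(b)$: one must produce a single element of $\mbox{RCL}^*(X)$ that simultaneously meets every $U_i$ strongly, stays $\delta$-far from $B$, and violates exactly the intended defining condition of $\mathscr{V}$. Securing all three at once is what forces the use of the regularly weakly $R_0$ property (to convert mere set inclusions into the far-from-complement relation $\ll_\delta$, i.e.\ into $\not\delta B$), the closure of $\mbox{RCL}(X)$ under finite unions (to keep $E$ inside $\mbox{RCL}^*(X)$), and the identity $\text{open}\subset\cl V\Rightarrow\text{open}\subset V$ for regular open $V$ (to guarantee the relevant sets are nonempty). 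The only other careful point, shared by both directions, is the interaction of the strong-hit relation $\sn$ with the far-miss relation $\not\delta B$ encoded in $(X\setminus B)^{++}$, which is exactly what localizes the strong contact of $E$ to the region $U_i\cap(X\setminus B)$.
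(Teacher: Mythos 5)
Your proof is correct and follows essentially the same route as the paper's: $(b)\Rightarrow(a)$ via (N2), regularity of $E$, and (N4), and $(a)\Rightarrow(b)$ by using the regularly weakly $R_0$ property to manufacture regular closed witnesses in $\mathscr{U}\setminus\mathscr{V}$, assembled with (N3) and (P3). The only divergence is in the first half of $(a)\Rightarrow(b)$, where the paper adjoins a single $C\ll_\delta (X\setminus B)\setminus(X\setminus D)$ to an arbitrary $A\in\mathscr{U}$ while you build the witness from scratch as $C_0\cup C_1\cup\dots\cup C_n$ with $C_i\ll_\delta U_i\cap(X\setminus B)$; this is a mild improvement, since it keeps the witness inside $\mbox{RCL}(X)$ even when the available element of $\mathscr{U}$ is a singleton (where $A\cup C$ need not belong to $\mbox{RCL}^*(X)$) and it uses the same construction pattern for both halves of that direction.
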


\begin{proof}
$(a) \Rightarrow (b)$ . Suppose $A \in \mathscr{U}$ and $(X \setminus B) \setminus (X \setminus D) \neq \emptyset $. Being $X$ regularly weakly $R_0$, there exists a regular closed set $C$ strongly included in $(X \setminus B) \setminus (X \setminus D)$. We want to prove that $A \cup C \in \mathscr{U} \setminus \mathscr{V}$. $A \cup C $ belongs to $\bigcap_{i=1}^n {U_i}^{\doublewedge}$ by property $(N3)$ of strong proximities; furthermore $A$ and $C$ are far from $B$ because $A \in (X \setminus B)^{++}$ and $C \ll_\delta X \setminus B$. Moreover $A \cup C \not\in \mathscr{V}$ because $C \subset (X \setminus B) \setminus (X \setminus D)$ means that $C \cap D \neq \emptyset$.

Now we want to prove the second part of $(b)$. Suppose, by contradiction, that there exists $j^* \in \{1,...,m \}$ such that for all $i \in \{1,..,n \} \ (U_i \cap (X \setminus B)) \setminus (V_{j^*} \cap (X \setminus D)) \neq \emptyset$. We use again the property of being regularly weakly $R_0$ for $X$ and we have that there exist regular closed subsets $A_i \ll_\delta (U_i \cap (X \setminus B)) \setminus (V_{j^*} \cap (X \setminus D))$. We claim that $\bigcup_{i=1}^n A_i \in \mathscr{U} \setminus \mathscr{V}$. Observe that $\bigcup_{i=1}^n A_i \in \mathscr{U}$  because of property $(N3)$ for strong proximities and property $(P3)$ for Lodato proximities. Instead, $\bigcup_{i=1}^n A_i \not\in \mathscr{V}$ because $A_i \ll_\delta (U_i \cap (X \setminus B)) \setminus (V_{j^*} \cap (X \setminus D))$ implies that $A_i \cap (V_{j^*} \cap (X \setminus D)) = \emptyset$ and, being $A_i \subset X \setminus B \subset X \setminus D$, we have that $A_i \cap V_{j^*}= \emptyset$ for all $i$. So by property $(N2)$ we have that $A_i \not\sn V_{j^*}$ and by property $(N3)$ we obtain $\bigcup_{i=1}^n A_i \not\sn V_{j^*}$.

$(b)\Rightarrow (a)$. Suppose that $A \in \mathscr{U}$ and $A \in \mbox{RCL}(X)$. We want to prove that $A$ belongs to $\mathscr{V}$ as well. Being $A \ll_\delta X \setminus B \subset X \setminus D$, we have that $A \not\delta D$. Moreover we have to prove that $A \sn V_j $ for each $j$. By the hypothesis we know that there exists $i$ such that $U_i \cap (X \setminus B) \subset V_j \cap (X \setminus D)$. So $A \cap U_i \subset A \cap V_j$. But, if $A \sn U_i$, then $A \cap U_i \neq \emptyset$, and by the regularity of $A$ we have that $\Int(A) \cap U_i \neq \emptyset$. Hence $\Int(A) \cap V_j \neq \emptyset$ and by property $(N4)$ we have $A \sn V_j$ for all $j$. If $A \in \mathscr{U}$ and $A$ is a point of $X$, the implication is easy. 
\end{proof}

Now we can prove Theorem~\ref{2count}.

\begin{proof}
\textit{(of thm. \ref{2count})}. $i) \Rightarrow ii)$. First of all we want to prove that $X$ is second countable. By $i)$ we know that there exist countable subfamilies $\mathscr{O} \subset \mbox{RO}(X)$ and $\mathscr{B}' \subset \mathscr{B}$ such that $\{ (\bigcap_{i=1}^n A_i^\doublewedge) \cap (X \setminus B) ^{++} : A_i \in \mathscr{O}, \ i\in \{1,..,n\}, \  n \in \mathbb{N}, \  X \setminus B \in \mathscr{O}, \ B \in \Sigma(\mathscr{B}') \}$ is a countable base for $ \tau^\doublewedge_\mathscr{B} $. We claim that $\{W \cap (X \setminus D): W, X \setminus D \in \mathscr{O}, D \in \Sigma(\mathscr{B}')\}$ is a countable base for the topology on $X$. Take any open set $V$ in $X$ and suppose $x \in V$. Choose $D \in \mathscr{B}$ such that $x \not\in D$. Then $x \in V^\doublewedge \cap (X \setminus D)^{++}$. So there exists an element of the countable base for $\tau^\doublewedge_\mathscr{B} $,  $(\bigcap_{i=1}^n A_i^\doublewedge) \cap (X \setminus B) ^{++} $, that contains $x$ and is contained in $V^\doublewedge \cap (X \setminus D)^{++}$. By lemma \ref{lemma1} we have that there exists $i^* \in \{1,..,n\}$ such that $A_{i^*} \cap (X \setminus B) \subset V \cap (X \setminus D)$. Hence $x \in A_{i^*} \cap (X \setminus B) \subset V \cap (X \setminus D)$ and the second countability is achieved.

Consider now $B \in \mathscr{B}$ and $B, K \in \mbox{RCL}^*(X)$ such that $B \not\delta K.$ So $K \in (X \setminus B)^{++}$ and, by $(i)$, there exists an element of the countable base for $ \tau^\doublewedge_\mathscr{B} $, $(\bigcap_{i=1}^n A_i^\doublewedge) \cap (X \setminus H) ^{++} $, that contains $K$ and is contained in $(X \setminus B)^{++}$. Hence, by lemma \ref{lemma1}, we have that $B \subset H$, where $H \in \Sigma(\mathscr{B}')$. Finally we have $B \subset H \ll_\delta X \setminus K$, being $K \in (X \setminus H) ^{++} $.

$ii) \Rightarrow i)$. Let $\mathfrak{T}$ be a countable base for $X$. Take any open set in $ \tau^\doublewedge_\mathscr{B} $, \ $\mathscr{U}=(\bigcap_{i \in I} V_i^\doublewedge) \cap (X \setminus C) ^{++} $, where $C \in \Sigma(\mathscr{B})$. Suppose $A \in \mathscr{U}$, with $A \in \mbox{RCL}(X)$. Then, by axiom $(N2)$, we have  $A \cap V_i \neq \emptyset$ for all $i \in I$ and, being $A$ regular, also $\Int(A) \cap V_i \neq \emptyset$. So, for each $i$ there exists $x_i \in \Int(A) \cap V_i $ and, being $\mathfrak{T}$ a base, there exists $W_k \in \mathfrak{T} : x_i \in W_k \subset V_i$, where $k$ runs in a countable set. Take the smallest regular open set containing $W_k$, $R_k$. We have that $ x_i \in R_k \subset V_i$ because $V_i$, too, is  a regular open set.  On the other side, by $ii)$ we know that there exists $D \in \Sigma(\mathscr{B}')$ such that $ C \subset  D \ll_\delta X \setminus A$. Now let $\mathscr{Z}= (\bigcap_{k=1}^n R_k^\doublewedge) \cap (X \setminus D)^{++}$. We have that $A \in \mathscr{Z} \subset \mathscr{U}$. We can repeat the same procedure even if $A$ is a singleton. 
\end{proof}

\section{Descriptive Strongly Proximal Connectedness}
The concept of \emph{strong proximity}  easily finds applications in several fields. Here we want to present, in particular, connections with \emph{descriptive proximities} and \emph{Vorono\" i regions}. One of the main fields of application for them is image processing.

The theory of descriptive nearness \cite{Peters2014} is usually adopted when dealing with subsets that share some common properties without being spatially close. We talk about \emph{non-abstract points} when points have locations and features that can be measured.  The mentioned theory is particularly relevant when we want to focus on some  of these aspects. For example, if we take a picture element $x$ in a digital image, we can consider grey-level intensity, colour, shape or texture of $x$. We can define an $n$ real valued probe function $\Phi: X \rightarrow \mathbb{R}^n$, where $\Phi(x)= (\phi_1(x),.., \phi_n(x))$ and each $\phi_i$ represents the measurement of a particular feature. So $\Phi(x)$ is a feature vector containing numbers representing feature values extracted from $x$. $\Phi(x)$ is also called \emph{description} of $x$. 

Descriptive nearness is a powerful tool to shift our attention from nearness of sets in a spatial sense to nearness of their features.

\begin{example}
Let $X$ be a bi-dimensional space of picture points and $\Phi: X \rightarrow \mathbb{R}^2$ a description on $X$ defined by $\Phi(x)= ($ color of $x$,gradientAngle in $x)$, where in the first entry we have a value for the color of the picture point $x$, while in the second entry we have the image gradient angle calculated in $x$. It means that to each picture point we can associate a bi-dimensional vector whose entries are represented by axial derivatives of color functions, $\triangledown(f)=\left( \frac{\partial f}{\partial x}, \frac{\partial f}{\partial y} \right)$. Then we can calculate the gradient angle by the formula $\theta = \arctan 2 \left(\frac{\partial f}{\partial x}, \frac{\partial f}{\partial y}\right) $.
\end{example}

\begin{figure}[!ht]
\begin{center}
\includegraphics[width=80mm]{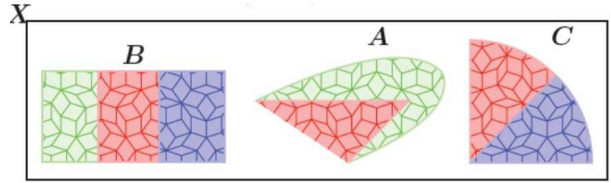}
\caption{$A$ descriptively strongly near $B$}
\label{fig:sndConnectedEx43}
\end{center}
\end{figure}

In~\cite{PetersGuadagni2015connectedness}, we introduced a new kind of connectedness related to strong proximities.

\begin{definition}\label{def:snConnected}
Let $X$ be a topological space and $\sn$ a strong proximity on $X$. We say that $X$ is $\sn-$connected if and only if $X = \bigcup_{i \in I} X_i$, where $I$ is a countable subset of $\mathbb{N}$, $X_i$ and $\Int(X_i)$ are connected for each $i \in I$, and $X_{i-1} \sn X_i$ for each $i \geq 2$.  \qquad \textcolor{blue}{$\blacksquare$}
\end{definition}

\begin{remark}\label{descrsn}
$X_{i-1} \sn X_i$ in Def.~\ref{def:snConnected} can be formulated in descriptive terms. Let $X$ be a set, $\Phi$ a description that maps $X$ to $\mathbb{R}^n$, $\sn$ a strong proximity on $\mathbb{R}^n$ endowed with the Euclidean topology . We say that two subsets $A, B$ are \emph{descriptively strongly near}, and we write
\[
A\ \snd\ B,\ \mbox{if and only if}\ \Phi(A)\ \sn\ \Phi(B).\ \mbox{(descriptive strongly near)} \mbox{\qquad \textcolor{blue}{$\blacksquare$}}
\]
\end{remark}

\begin{example}\label{ex:picturePoints}
Let $X$ be a space of picture points  represented in Fig.~\ref{fig:sndConnectedEx43}.  with red, green or blue colors and let $\Phi: X \rightarrow  \mathbb{R} $ a description on $X$ representing the color of a picture point, where $0$ stands for red (r), $1$ for green (g) and $2$ for blue (b). Suppose the range is endowed with the topology given by $\tau= \{ \emptyset,\{ r,  g\}, \{r,g,b\} \}$. Observe that, choosing such a topology, we pay attention on red and green. Next consider the following strong proximity : $A {\sn} B \Leftrightarrow \Int A \cap \Int B \neq \emptyset$, provided $A$ and $B$ are not singletons; if $A = \{x\}$, then $x \in \Int(B)$, and if $B$ too is a singleton, then $x=y$. Then $A {\sn}_\Phi B$ because $\Phi(A) = \{g,r\}= \Int(\Phi(A))$ and $\Phi(B)= \{r,g,b\}= \Int(\Phi(B))$. Instead $B \not{\sn}_\Phi C$ because $\Phi(C)= \{r,b\}$ and $\Int(\Phi(C))= \emptyset$. \qquad \textcolor{blue}{$\blacksquare$}
\end{example}

\begin{figure}[!ht]
\begin{center}
\includegraphics[width=80mm]{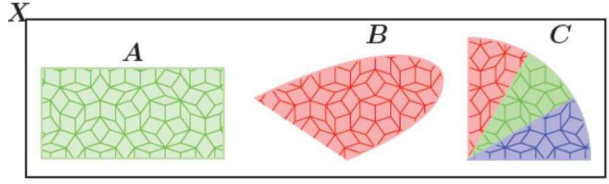}
\caption{$\Phi(A)$ descriptively strongly near $\Phi(C)$}
\label{fig:sndConnectedEx45}
\end{center}
\end{figure}

\begin{definition}
Let $X$ be a topological space and $\sn$ a strong proximity on $X$. We say that $X$ is \emph{descriptively $\sn-$connected} if and only if $X = \bigcup_{i \in I} X_i$, where $I$ is a countable subset of $\mathbb{N}$, $\Phi(X_i)$ and $\Phi( \Int(X_i))$ are connected in the topology on $\mathbb{R}^N$ for each $i \in I$, and $X_{i-1}\ \snd\  X_i$ for each $i \geq 2$.  \qquad \textcolor{blue}{$\blacksquare$}
\end{definition}
\vspace{3mm}

\begin{example}
Let $X$ be a space of picture points with red, green or blue colors represented in Fig.~\ref{fig:sndConnectedEx45}. Take $\Phi,\  \tau $ and $\sn $ as in example \ref{ex:picturePoints}. The space $X= A \cup B \cup C$ is descriptively $\sn-$connected. In fact $\Phi(A)= \{g\}$, $\Int(\Phi(A))= \emptyset$, $\Phi(C)= \{r,g,b\}=\Int(\Phi(C))$, $\Phi(B)= \{r\}$, $\Int(\Phi(B))=\emptyset$ and they are all connected in $\tau$. Furthermore $\Phi(A) \sn \Phi(C)$ and $\Phi(C) \sn \Phi(B)$.\qquad \textcolor{blue}{$\blacksquare$}
\end{example}

\begin{figure}[!ht]
\begin{center}
\includegraphics[width=80mm]{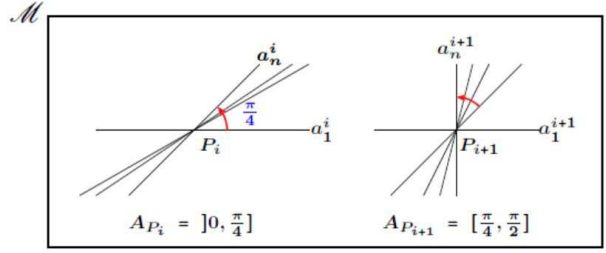}
\caption{$A_{P_{n-1}}$ strongly near $A_{P_{n}}$ for each $n\geq 2$}
\label{fig:sndConnectedEx46}
\end{center}
\end{figure}

\begin{example}\label{ex:curvesManifold}{\bf Curves Manifold}.\\
Let $\mathscr{M}$ be a manifold represented in Fig.~\ref{fig:sndConnectedEx46}. For each point $P_i$ in $\mathscr{M}$ consider a family of curves $\{ \alpha_k^i: k \in K_i\}$ and fix a specific curve $\alpha_1^i$ as reference curve. Take $\theta_{ \{1,k\} }^i$ as the angle between the curves $\alpha_1^i$ and $\alpha_k^i$;  $A_{P_i}= \{ \theta_{ \{1,k\} }^i : k \in K_i \setminus \{1\} \}$. We can talk about descriptive strong connectedness for family of curves. In this case, our space $X$ is represented by all the curves for all the points of $\mathscr{M}$. Our description maps each curve $\alpha_k^i$, for $k \neq 1$, in $\theta_{ \{1,k\} }^i$, and $\alpha_1^i$ in some of the already found values among $\theta_{ \{1,k\} }^i$ with $k \neq 1$ . In particular, we have that the set of all the curves is descriptively $\sn-$connected if we can find a countable subfamily of points $P_n$ such that $A_{P_n} $ is connected and $A_{P_{n-1}} \sn A_{P_n}$ for each $n \geq 2$. To better understand look at figure. We have $A_{P_i}= ]0, \frac{\pi}{4}]$ and $A_{P_{i+1}}= [\frac{\pi}{4},\frac{\pi}{2}]$. They are connected and, if we take $A \sn B \Leftrightarrow A \cap B \neq \emptyset$, they are also strongly near. It means that, fixed $\alpha_1^i$ and $\alpha_1^{i+1}$ there is at least one curve through $P_i$ and one through $P_{i+1}$ such that they form the same angle with $\alpha_1^i$ and $\alpha_1^{i+1}$ respectively. We could require more choosing a stronger strong proximity. In the previous way we obtained a sort of angle connectedness for families of curves. \qquad \textcolor{blue}{$\blacksquare$}
\end{example}

\section{Proximal and strongly proximal manifolds}
Suppose that $\mathscr{M}$ is a topological space.  $\mathscr{M}$ is a \emph{manifold} of dimension $n$, provided it is Hausdorff, second countable and locally Euclidean of dimension $n$ so that each point $p$ in $\mathscr{M}$ has a neighbourhood (an open set containing $p$) of $U$ that is homeomorphic to an open set in $\mathbb{R}^n$.  Let $\varphi:U\longrightarrow \mathbb{R}^n$ be a homeomorphism on the image.  A \emph{chart} on $\mathscr{M}$ is a pair $(U,\varphi)$.  When the meaning is clear from the context, we write chart $U$ instead of $(U,\varphi)$.  An \emph{atlas} $\mathcal{A}$ for manifold $\mathscr{M}$ is a collection of charts whose domain covers $\mathscr{M}$.  Given a pair of charts $(U,\varphi),(V,\psi)$, the composite map $\psi\circ\varphi^{-1}:\varphi(U\cap V)\longrightarrow \psi(U\cap V)$ is called a transition map from $\varphi$ to $\psi$. 

A pair of charts is \emph{smoothly compatible}, provided $U\cap V \neq\emptyset$ and the transition map $\psi\circ\varphi^{-1}$ is a $C^\infty-$diffeomorphism on $\varphi(U \cap V)$.  An atlas $\mathcal{A}$ is \emph{smoothly compatible}, provided any pair of charts in $\mathcal{A}$ is smoothly compatible~\cite[\S 1, p. 12]{Lee2013smoothManifold}.  By replacing the requirement that charts be smoothly compatible
 with the weaker requirement that each transition map $\psi\circ\varphi^{-1}$ and its inverse are $C^r-$differentiable, $\mathscr{M}$ is called a $C^r-$manifold.
  
Suppose that $\mathscr{M}$ is an $n-$dimensional $C^r-$manifold. We can endow it with a proximity that is strictly connected with its structure. For example, if $\mathcal{A}= \{(U_i, \phi_i): i \in I \} $ is an atlas on $\mathscr{M}$, we can define a proximity on $\mathcal{A} \times \mathcal{A}$ in the following way:

\begin{center}
$U_i\ \delta\ U_j $\\
$ \Leftrightarrow$\\
$ \exists C \subseteq U_i, D \subseteq U_j \ f: \phi_i(C) \rightarrow \phi_j(D)$,  
\end{center}
such that $f$ is  $C^r-$diffeomorphism.

\begin{theorem}\label{proxmanif}
The relation $\delta$ is a proximity on $\mathcal{A} \times \mathcal{A}$.
\end{theorem}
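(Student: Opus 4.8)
The plan is to verify, one at a time, the Lodato axioms (P0)--(P4) for the relation $\delta$ on $\mathcal{A}\times\mathcal{A}$, exhibiting in each case the subsets $C,D$ and the witnessing $C^r$-diffeomorphism whose existence the relation demands. Throughout, I would keep in mind two elementary facts: a map $f\colon\phi_i(C)\to\phi_j(D)$ is a $C^r$-diffeomorphism if and only if its inverse $f^{-1}\colon\phi_j(D)\to\phi_i(C)$ is one, and each coordinate map $\phi_i$ is a homeomorphism onto an open subset of $\mathbb{R}^n$, so passing between a chart and its coordinate image costs nothing.

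First I would dispatch the formal axioms. Symmetry (P0) is immediate: if $f$ witnesses $U_i\ \delta\ U_j$ via $C\subseteq U_i$ and $D\subseteq U_j$, then $f^{-1}$ witnesses $U_j\ \delta\ U_i$ via $D$ and $C$. Axiom (P1) holds because every chart domain is by definition a nonempty open neighbourhood, so $U_i,U_j\neq\emptyset$ whenever they are related, provided one insists the witnessing $C,D$ be nonempty so that $f$ is a genuine diffeomorphism of open sets. The geometric core is (P2): if $U_i\cap U_j\neq\emptyset$, I would take $C=D=U_i\cap U_j$ and let $f$ be the transition map $\phi_j\circ\phi_i^{-1}$ restricted to $\phi_i(U_i\cap U_j)$. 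Since $\mathscr{M}$ is a $C^r$-manifold and $\mathcal{A}$ its $C^r$-atlas, this transition map is precisely a $C^r$-diffeomorphism on the overlap, so $U_i\ \delta\ U_j$. This is the step where the $C^r$-compatibility of the atlas actually enters.

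There remain the union axioms. For (P3) the backward implication is mere monotonicity: a witness for $U_i\ \delta\ U_j$ uses $D\subseteq U_j\subseteq U_j\cup U_k$ unchanged, giving $U_i\ \delta\ (U_j\cup U_k)$. The forward implication I would handle by restriction: a witness with nonempty $D\subseteq U_j\cup U_k$ meets at least one of $U_j,U_k$, and restricting the diffeomorphism to the part of $C$ corresponding to $D\cap U_j$ (respectively $D\cap U_k$) produces a witness for $U_i\ \delta\ U_j$ (respectively $U_i\ \delta\ U_k$). Axiom (P4) I would prove by composing witnesses: from $A\ \delta\ B$ one obtains a point of $B$ lying in the diffeomorphic image of a subset of $A$, and the hypothesis $\{b\}\ \delta\ C$ for each $b\in B$ lets one chain this point through to $C$, yielding the required $C^r$-diffeomorphism from a subset of $A$ onto a subset of $C$.

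I expect the main obstacle to be the union axiom (P3), and with it (P4): the symbol $U_j\cup U_k$ does not name a member of $\mathcal{A}$, so the relation must first be read off on arbitrary subsets of $\mathscr{M}$ rather than only on charts, and one must then justify that splitting a single $C^r$-diffeomorphism whose coordinate image straddles two charts (for P3) and composing two such maps while preserving $C^r$-smoothness and nonemptiness (for P4) are legitimate. Making this precise---in particular ensuring the restricted and composed maps remain diffeomorphisms between open coordinate images and do not degenerate to the empty or lower-dimensional case---is the delicate part; the remaining axioms are routine.
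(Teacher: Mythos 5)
Your verifications of (P0)--(P2) coincide with the paper's: symmetry via the inverse diffeomorphism, nonemptiness of chart domains for (P1), and the transition map $\phi_j\circ\phi_i^{-1}$ restricted to $\phi_i(U_i\cap U_j)$ as the witness for (P2). The divergence, and the genuine gap, is in the union axiom. The paper does not extend $\delta$ to arbitrary subsets of $\mathscr{M}$; it states and proves (P3) only in the case where $U_j\cup U_k$ is itself a chart domain $U_h\in\mathcal{A}$ with its own coordinate map $\phi_h$. Under that reading your backward implication fails as stated: a witness $f:\phi_i(C)\to\phi_j(D)$ for $U_i\ \delta\ U_j$ cannot be used ``unchanged'' to witness $U_i\ \delta\ U_h$, because the target must now be a subset of $\phi_h(D)$, not of $\phi_j(D)$. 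The paper's proof supplies exactly the step you skip: it composes $f$ with the transition map $g:\phi_j(D)\to\phi_h(D)$, a $C^r$-diffeomorphism by compatibility of the atlas; the forward direction is handled symmetrically by restricting the given witness over $\phi_h(E\cap U_j)$ and composing with the transition map into $\phi_j$. You correctly identify in your closing paragraph that the meaning of $U_j\cup U_k$ is the crux, but you leave it unresolved and instead propose redefining the relation on arbitrary subsets, which changes the statement being proved rather than proving it.

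Second, your treatment of (P4) does not work as described, and the paper never attempts it: its proof checks only (P0)--(P3), i.e.\ a basic proximity rather than a full Lodato proximity. The chaining you propose founders on precisely the degeneracy you flag at the end: a witness for $\{b\}\ \delta\ C$ is forced to have domain the single point $\phi(b)$, so it carries no $C^r$ information and cannot be composed with the witness for $A\ \delta\ B$ to yield a diffeomorphism between coordinate images of subsets of $A$ and of $C$; and if singleton witnesses are admitted at all, the relation trivializes, which suggests (P4) is not provable for this $\delta$ without further hypotheses. In summary: your (P0)--(P2) match the paper, your (P3) is missing the transition-map re-coordination that is the actual content of the paper's argument, and your (P4) is an additional claim that neither your sketch nor the paper establishes.
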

\begin{proof}
$1)$ We have that $\emptyset \not\delta U_i , \ \forall U_i \in \mathcal{A}$ because $\emptyset$ is not a domain for any chart. $2)$ Symmetry is obvious. $3) \ U_i \cap U_j \neq \emptyset \Rightarrow U_i\ \delta\ U_j$, since we can consider the transition maps on  $U_i \cap U_j$. $4)$ We want to show that if $U_j \cup U_k = U_h \in \mathcal{A}, \ U_i \delta (U_j \cup U_k) \Leftrightarrow U_i\ \delta\ U_j \hbox{ or } U_i\ \delta\ U_k $.\\ 
\noindent $\stackrel{4)}{\Leftarrow}$: Suppose that $U_i\ \delta\ U_j$. So there exist $C \subseteq U_i, D \subseteq U_j \ f: \phi_i(C) \rightarrow \phi_j(D)$  s.t. $f$ is a $C^r-$diffeomorphism. But, being $D \subseteq U_j \cup U_k$, we have also $g: \phi_j(D) \rightarrow \phi_h(D)$ that is a $C^r-$diffeomorphism. Hence, by composing $f$ and $g$ we obtain $U_i\ \delta\ (U_j \cap U_k)$.\\ 
\noindent $\stackrel{4)}{\Rightarrow}$: Suppose $U_i\ \delta\ (U_j \cup U_k)$. Then there exist $C \subseteq U_i, E \subseteq U_h= U_j \cup U_k \ f: \phi_i(C) \rightarrow \phi_h(E)$  such that $f$ is  a $C^r$-diffeomorphism. We can assume that $E \cap U_j \neq \emptyset$. So we have a transition map $g: \phi_h(E \cap U_j) \rightarrow \phi_j(E \cap U_j) $ that is a $C^r-$diffeomorphism. Now we can take the restriction of $f$, $f^*$, that is an homeomorphism onto $\phi_h(E \cap U_j)$. By composing $f^* $ and $g$ we obtain the desired result.
\end{proof}

On a manifold, it is possible to define also a stronger kind of proximity,called a \textit{manifold strong proximity}. As before, take an atlas $\mathcal{A}= \{(U_i, \phi_i): i \in I \} $. 

\begin{definition}
Let ${\sn}_\mathcal{A}$ be a relation on $\mathcal{A} \times \mathcal{A}$. It is called \emph{manifold strong proximity}, if the following axioms hold:
\begin{enumerate}
\item[$(M0)$] $\emptyset \not{\sn}_\mathcal{A} U_i \forall i \in I$,
\item[$(M1)$] $U_i {\sn}_\mathcal{A} U_j \Leftrightarrow U_j {\sn}_\mathcal{A} U_i \ \forall i,j \in I $
\item[$(M2)$] $U_i {\sn}_\mathcal{A} U_i \Rightarrow \phi_i(U_i) \cap \phi_j(U_j) \neq \emptyset$,
\item[$(M3)$] If $\{U_h\}_{h\in H \subseteq I}$ is an arbitrary family of domains in $\mathcal{A}$ and  $U_i {\sn}_\mathcal{A} U_j$ for some $j \in H \setminus \{i\} \subseteq I$, then  $ U_i {\sn}_\mathcal{A} \bigcup_{h \in H \setminus \{i\}} U_h$.
\item[$(M4)$] $\Int(\phi_i(U_i)) \cap \Int(\phi_j(U_j))\neq \emptyset \Rightarrow U_i {{\sn}_\mathcal{A}} U_j$
\end{enumerate}
\end{definition}

Define the following relation on $\mathcal{A} \times \mathcal{A}$:
\[
U_i\ {\sn}_\mathcal{A}\ U_j \Leftrightarrow \phi_i(U_i) \cap \phi_j(U_j) \neq \emptyset. 
\]

That is, chart $U_i$ is strongly near chart $U_j$, if and only if the chart descriptions $\phi_i(U_i) \cap \phi_j(U_j)$ have nonempty intersection.  Moreover, if $U_j \cup U_k $ is not a domain in $\mathcal{A}$, define $U_i {\sn}_{\mathcal{A}} (U_j \cup U_k) \Leftrightarrow \phi_i(U_i) \cap (\phi_j(U_j) \cup \phi_k(U_k)) \neq \emptyset $. $(*)$

\begin{theorem}
The relation ${\sn}_{\mathcal{A}}$ is a manifold strong proximity on $\mathcal{A} \times \mathcal{A}$, if $U_i \cup U_j$ is not a domain in $\mathcal{A}$ for all $i$ and $j$ with $i \neq j$.
\end{theorem}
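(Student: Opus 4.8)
The plan is to verify the five axioms $(M0)$--$(M4)$ directly from the defining equivalence $U_i\ {\sn}_\mathcal{A}\ U_j \Leftrightarrow \phi_i(U_i) \cap \phi_j(U_j) \neq \emptyset$, together with the extension $(*)$ governing unions. Three of the axioms are essentially immediate and I would dispose of them first. Axiom $(M0)$ holds because $\emptyset$ is the domain of no chart and its description is $\phi(\emptyset)=\emptyset$, so the description intersection with any $\phi_i(U_i)$ is empty; axiom $(M1)$ is just the commutativity of set intersection, $\phi_i(U_i)\cap\phi_j(U_j)\neq\emptyset \Leftrightarrow \phi_j(U_j)\cap\phi_i(U_i)\neq\emptyset$; and axiom $(M2)$ is, under its natural reading $U_i\ {\sn}_\mathcal{A}\ U_j \Rightarrow \phi_i(U_i)\cap\phi_j(U_j)\neq\emptyset$, nothing more than the forward direction of the defining equivalence.

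Next I would dispatch $(M4)$. The key observation is that, by the definition of a chart, each $\phi_i$ is a homeomorphism of $U_i$ onto an \emph{open} subset of $\mathbb{R}^n$; hence $\phi_i(U_i)$ is open and $\Int(\phi_i(U_i))=\phi_i(U_i)$. Consequently the hypothesis $\Int(\phi_i(U_i))\cap\Int(\phi_j(U_j))\neq\emptyset$ coincides with $\phi_i(U_i)\cap\phi_j(U_j)\neq\emptyset$, which is exactly $U_i\ {\sn}_\mathcal{A}\ U_j$. Thus $(M4)$ collapses to the defining equivalence once openness of chart images is invoked.

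The substantive step is $(M3)$. Here I would use the extension $(*)$, read in the form $U_i\ {\sn}_\mathcal{A}\ \bigcup_{h} U_h \Leftrightarrow \phi_i(U_i)\cap\bigcup_h \phi_h(U_h)\neq\emptyset$, which is the appropriate reading precisely because such a union of atlas domains is itself never a domain in $\mathcal{A}$, by the standing hypothesis of the theorem. Assuming $U_i\ {\sn}_\mathcal{A}\ U_j$ for some $j\in H\setminus\{i\}$ gives $\phi_i(U_i)\cap\phi_j(U_j)\neq\emptyset$; since $\phi_j(U_j)\subseteq\bigcup_{h\in H\setminus\{i\}}\phi_h(U_h)$, monotonicity of intersection yields $\phi_i(U_i)\cap\bigcup_{h\in H\setminus\{i\}}\phi_h(U_h)\neq\emptyset$, and $(*)$ then delivers $U_i\ {\sn}_\mathcal{A}\ \bigcup_{h\in H\setminus\{i\}} U_h$, as required.

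The main obstacle is not a calculation but a well-definedness issue hidden in $(M3)$, and it is exactly what the theorem's hypothesis resolves. If some union $U_j\cup U_k$ of atlas domains were itself a chart domain $U_\ell\in\mathcal{A}$, then that set would carry two a priori different descriptions, the single chart image $\phi_\ell(U_\ell)$ and the union $\phi_j(U_j)\cup\phi_k(U_k)$, and the relation ${\sn}_\mathcal{A}$ to it would be ambiguous. Requiring that $U_i\cup U_j$ be a non-domain for all $i\neq j$ eliminates this ambiguity and makes $(*)$ a legitimate definition, so the monotonicity argument for $(M3)$ goes through without conflict. I would therefore state this well-definedness remark explicitly before verifying $(M3)$, since it is the only point at which the hypothesis is genuinely used.
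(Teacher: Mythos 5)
Your proposal is correct and follows essentially the same route as the paper's own (much terser) proof: each axiom is checked directly from the defining equivalence, $(M4)$ via openness of the chart images under the homeomorphisms $\phi_i$, and $(M3)$ via the extension $(*)$, whose legitimacy is exactly what the non-domain hypothesis guarantees. Your explicit well-definedness remark for $(*)$ is a useful elaboration of what the paper leaves implicit, but it does not change the argument.
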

\begin{proof}
$M0) $ That $\emptyset \not{\sn}_\mathcal{A} U_i$ for each $i\in I$ is straightforward. Moreover, if $\mathscr{M}$ is a domain, we do not need any other domain. $M1)$ Symmetry is obvious. $M2)$ The descriptive form of $A \delta B \Rightarrow A \cap B \neq \emptyset$ holds by definition. $M3)$ This holds because we know that $U_i \cup U_j$ is not a domain in $\mathcal{A}$ for all $i$ and $j$ with $i \neq j$. So we refer to $(*)$. $M4)$ This holds being $U_i$ and $U_j$ open sets and the charts homeomorphisms.
\end{proof}

In terms of the proximity relation $\delta$ on $\mathcal{A}\times \mathcal{A}$ from Theorem~\ref{proxmanif}, we obtain the following result.

\begin{theorem}\label{thm:atlasProximity}
Let $U_i,U_j\in \mathcal{A}$ be charts in manifold atlas $\mathcal{A}$.  $U_i\ {{\sn}_\mathcal{A}}\ U_j \Rightarrow U_i\ \delta\ U_j$.
 \end{theorem}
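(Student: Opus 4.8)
The plan is to unfold both relations and show that strong nearness supplies exactly the data required by $\delta$. Recall that $U_i\ {\sn}_\mathcal{A}\ U_j$ means $\phi_i(U_i) \cap \phi_j(U_j) \neq \emptyset$, whereas $U_i\ \delta\ U_j$ asks for subsets $C \subseteq U_i$, $D \subseteq U_j$ and a $C^r$-diffeomorphism $f : \phi_i(C) \to \phi_j(D)$. Thus the entire task reduces to exhibiting a single witnessing triple $(C, D, f)$, and I would obtain it directly from the overlapping region of the two chart images.

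First I would set $W = \phi_i(U_i) \cap \phi_j(U_j)$. By hypothesis $W \neq \emptyset$, and since each chart map is a homeomorphism onto an open subset of $\mathbb{R}^n$, both $\phi_i(U_i)$ and $\phi_j(U_j)$ are open; hence $W$ is a nonempty open subset of $\mathbb{R}^n$. Next I would define $C = \phi_i^{-1}(W) \subseteq U_i$ and $D = \phi_j^{-1}(W) \subseteq U_j$. Because $\phi_i$ and $\phi_j$ are homeomorphisms, $C$ and $D$ are open and nonempty, and crucially $\phi_i(C) = W = \phi_j(D)$.

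The key step is then to observe that the identity map $\mathrm{id}_W : W \to W$ serves as the required $f$: since $\phi_i(C)$ and $\phi_j(D)$ are literally the same open set $W$, the identity is trivially a $C^r$-diffeomorphism (indeed $C^\infty$) from $\phi_i(C)$ onto $\phi_j(D)$. This produces an admissible witnessing triple $(C, D, \mathrm{id}_W)$ and hence yields $U_i\ \delta\ U_j$.

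The point worth emphasizing — and the only place where care is needed — is conceptual rather than computational. Manifold strong nearness ${\sn}_\mathcal{A}$ concerns overlap of the chart \emph{images} in $\mathbb{R}^n$, not overlap of the chart domains in $\mathscr{M}$; in particular $U_i\ {\sn}_\mathcal{A}\ U_j$ may hold even when $U_i \cap U_j = \emptyset$, so one cannot in general appeal to a transition map to produce $f$. The identity construction sidesteps this difficulty entirely, treating the overlapping-image case uniformly. I expect no genuine obstacle beyond confirming that $C$ and $D$ are nonempty (which follows immediately from $W \neq \emptyset$), so that the triple is admissible for $\delta$ and axiom $(P1)$ is respected.
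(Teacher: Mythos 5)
Your proof is correct and follows essentially the same route as the paper: take the preimages $C=\phi_i^{-1}(W)$ and $D=\phi_j^{-1}(W)$ of the common overlap $W=\phi_i(U_i)\cap\phi_j(U_j)$ and use the identity map on $W$ as the witnessing $C^r$-diffeomorphism. Your version merely spells out the details (openness of $W$, nonemptiness of $C$ and $D$) that the paper leaves implicit.
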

 \begin{proof}
If the intersection $ \phi_i(U_i) \cap \phi_j(U_j)$ is non-empty, we can take that part of $U_i$ that is mapped in $ \phi_i(U_i) \cap \phi_j(U_j)$, and the same with $U_j$. On the intersection we can take the identity map that is obviously a $C^r-$diffeomorphism.
\end{proof}

\begin{remark}
Observe that is particularly interesting to see that a manifold is descriptively ${{\sn}_\mathcal{A}}-$connected if we have on it an atlas composed by a countable number of connected domains such that $ \phi_i(U_i) \cap \phi_{i+1}(U_{i+1}) \neq \emptyset, \ \forall i \in I$.
\end{remark}

\begin{example}
A simple example of descriptively  ${{\sn}_\mathcal{A}}-$connected  manifold is $S^1$ with the stereographic projection atlas. In fact in this case we have two charts:
\begin{center}
$\phi_1: S^1 \setminus\{N\} \rightarrow \mathbb{R}$, $\phi_1(x,y)= \dfrac{1}{1-y}$,\\
$\phi_2: S^1 \setminus\{S\} \rightarrow \mathbb{R}$, $\phi_2(x,y)= \dfrac{1}{1+y}$,\\
\end{center}

\noindent where $N \equiv (0,1) $ is the north, and $S \equiv (0,-1)$ is the south.
We have that the domain are homeomorphic to the whole $\mathbb{R}$, so $\phi_1(S^1 \setminus \{N\}) \cap \phi_2(S^1 \setminus \{S\})\neq \emptyset $.
 
\end{example}
 
\begin{figure}[!ht]
\begin{center}
\includegraphics[width=35mm]{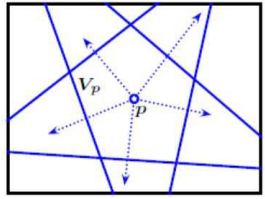}
\end{center}
\caption[]{$V_p$ = Intersection of closed half-planes}
\label{fig:convexPolygon}
\end{figure}

\section{Strongly proximal Vorono\" i regions}

A \emph{Vorono\" i diagram} represents a tessellation of the plane by convex polygons. It is generated by $n$ site points and each polygon contains exactly one of these points. In each region there are points that are closer to its generating point than to any other. \emph{Vorono\" i diagrams} were introduced by \emph{Ren\' e Descartes} (1667)  looking at the influence regions of stars. They were studied also by Dirichlet (1850) and Vorono\" i (1907), who extended the study to higher dimensions. 

To construct a Vorono\" i diagram, we have to start from a finite number of points. Consider a set $S$ of $n$ points in a finite-dimensional normed vector space $(X, \norm{\cdot})$. We call $S$ the \emph{generating set}. The Vorono\" i diagram based on $S$ is constructed by taking for each point of $S$ the intersection of suitable half planes. Take $p \in S $ and let $H_{pq}$ be the closed half plane of points at least as close to $p$ as to $q \in S \setminus \{p\}$ given by 
\[ 
H_{pq}= \{ x \in X : \norm{x-p} \leq \norm{x-q} \} .
\]
\noindent The intersection of all the half planes for $q \in S \setminus \{p\}$ gives the \emph{Vorono\" i region} $V_p$ of $p$:
\[ 
V_p = \bigcap_{q \in S \setminus \{p\}}	H_{pq}. 
\]
Vorono\"{i} regions are named after the Ukrainian mathematician Georgy Vorono\"{i}~\cite{Voronoi1903,Voronoi1907,Voronoi1908}.  The simplifying notation $V(p)$ is sometimes used instead of $V_p$, when $p$ is an expression such as $a_i$ for an indexed site. 

\begin{lemma}\label{lem:convexity}~{\rm \cite[\S 2.1, p. 9]{Edelsbrunner2014}} The intersection of convex sets is convex.
\end{lemma}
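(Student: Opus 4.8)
The plan is to argue directly from the definition of convexity. Recall that a subset $C$ of the vector space $X$ is convex precisely when, for every pair of points $x,y \in C$ and every scalar $\lambda \in [0,1]$, the convex combination $\lambda x + (1-\lambda)y$ again lies in $C$; equivalently, $C$ contains the entire segment joining any two of its points. So I would begin by letting $\{C_i\}_{i \in I}$ be an arbitrary family of convex subsets of $X$ and setting $C = \bigcap_{i \in I} C_i$.

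First I would fix two arbitrary points $x,y \in C$ together with a scalar $\lambda \in [0,1]$. By the definition of intersection, $x \in C_i$ and $y \in C_i$ for every index $i \in I$. Since each $C_i$ is convex, the defining property applied inside each $C_i$ yields $\lambda x + (1-\lambda)y \in C_i$ for every $i \in I$. As this membership holds for every member of the family, the point $\lambda x + (1-\lambda)y$ lies in $\bigcap_{i \in I} C_i = C$. Because $x,y \in C$ and $\lambda \in [0,1]$ were arbitrary, $C$ is convex, which is the desired conclusion.

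There is essentially no obstacle in this argument: the result is a one-line consequence of the observation that the convexity condition is a statement universally quantified over pairs of points, and any such condition is automatically inherited by an arbitrary intersection. The only degenerate situations worth a brief remark are the cases $I = \emptyset$ (so that $C = X$, which is convex) and the cases in which $C$ is empty or a singleton, all of which satisfy the convexity condition vacuously. In the application to Vorono\"i regions, each $V_p = \bigcap_{q \in S \setminus \{p\}} H_{pq}$ is then convex because every half plane $H_{pq}$ is convex, and this lemma supplies exactly that closure under intersection.
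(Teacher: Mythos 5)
Your proof is correct and follows essentially the same route as the paper's: both argue directly from the definition that the segment (equivalently, every convex combination $\lambda x+(1-\lambda)y$) joining two points of the intersection lies in each member of the family and hence in the intersection. Your version is slightly more general — arbitrary families in an arbitrary vector space rather than two sets in $\mathbb{R}^2$ — which is in fact the form needed for $V_p=\bigcap_{q\in S\setminus\{p\}}H_{pq}$, but the underlying argument is identical.
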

\begin{proof}
Let $A,B\subset \mathbb{R}^2$ be convex sets and let $K = A\cap B$.  For every pair points $x,y\in K$, the line segment $\overline{xy}$ connecting $x$ and $y$ belongs to $K$, since this property holds for all points in $A$ and $B$.   Hence, $K$ is convex.
\end{proof}

Since a Vorono\" i region is the intersection of closed half planes, each \emph{Vorono\" i region } is a closed convex polygon (see, {\em e.g.}, Fig.~\ref{fig:convexPolygon}).

\begin{remark} \rm
The Vorono\"{i} region $V_p$ depicted as the intersection of finitely many closed half planes in Fig.~\ref{fig:convexPolygon} is a variation of the representation of a Vorono\"{i} region in the monograph by H. Edelsbrunner~{\rm \cite[\S 2.1, p. 10]{Edelsbrunner2014}}, where each half plane is defined by its outward directed normal vector.  The rays from $p$ and perpendicular to the sides of $V_p$ are comparable to the lines leading from the center of the convex polygon in G.L. Dirichlet's drawing~\cite[\S 3, p. 216]{Dirichlet1850}.
\qquad \textcolor{black}{$\blacksquare$}
\end{remark}

We want to define a strong proximity acting on Vorono\" i regions. We say that two Vorono\" i regions are strongly near, and we write $V_p\ \sn\ V_q$, if and only if they share more than one point.  

\begin{theorem}\label{thm:VoronoiRegions}
Let $(X, \norm{\cdot})$ be a finite-dimensional normed vector space and $S$ a collection of points in $X$. The relation defined by saying $V_p \sn V_q$ if and only if they share more than one point is a strong proximity on $\mathscr{V}(S)$, the class of Vorono\" i regions generated by $S$.
\end{theorem}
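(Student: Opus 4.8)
The plan is to verify directly the five axioms (N0)--(N4) for the relation defined by $V_p \sn V_q$ if and only if $\abs{V_p \cap V_q} \geq 2$, extended to an arbitrary pair of subsets $A,B \subseteq X$ by declaring $A \sn B \Leftrightarrow \abs{A \cap B} \geq 2$ (this extension is forced on us because the union appearing in (N3) need not itself be a Vorono\"i region). Two geometric facts will carry most of the argument. First, each Vorono\"i region $V_p$ has nonempty interior: taking $\delta = \tfrac12 \min_{q \neq p}\norm{p-q} > 0$, every $x$ with $\norm{x-p} < \delta$ satisfies $\norm{x-q} \geq \norm{p-q} - \norm{x-p} > \norm{x-p}$ for all $q \neq p$, so the open ball of radius $\delta$ about $p$ lies in $V_p$; since $V_p$ is a closed convex polygon with nonempty interior (as recorded after Lemma~\ref{lem:convexity}), it contains more than one point. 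Second, any nonempty open subset of a normed vector space of dimension at least one contains an open ball and hence infinitely many points.

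With these in hand I would check the axioms in turn. Axiom (N0) is immediate: $\abs{\emptyset \cap A} = 0 < 2$ gives $\emptyset \not\sn A$, while $\abs{X \cap V_p} = \abs{V_p} \geq 2$ by the first fact gives $X \sn V_p$. Axiom (N1) holds since $\abs{V_p \cap V_q} = \abs{V_q \cap V_p}$, and (N2) holds since $\abs{V_p \cap V_q} \geq 2$ forces $V_p \cap V_q \neq \emptyset$. For (N3), if $V_p \sn V_{q_{i^*}}$ then $\abs{V_p \cap V_{q_{i^*}}} \geq 2$; because $V_{q_{i^*}} \subseteq \bigcup_{i} V_{q_i}$ we have $V_p \cap V_{q_{i^*}} \subseteq V_p \cap \bigl(\bigcup_i V_{q_i}\bigr)$, whence $\abs{V_p \cap (\bigcup_i V_{q_i})} \geq 2$ and $V_p \sn \bigcup_i V_{q_i}$. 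The side hypothesis $\Int(V_{q_{i^*}}) \neq \emptyset$ is automatically supplied by the first fact and is not otherwise used, so (N3) reduces to plain monotonicity of intersection.

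The one axiom carrying genuine content is (N4). Suppose $\Int(V_p) \cap \Int(V_q) \neq \emptyset$. This set is open and nonempty, so by the second fact it contains more than one point; as it is contained in $V_p \cap V_q$, we conclude $\abs{V_p \cap V_q} \geq 2$, i.e.\ $V_p \sn V_q$. I expect this to be the main (if still mild) obstacle, since it is precisely the step where the choice of \emph{more than one} shared point, rather than \emph{at least one}, must be reconciled with the axiom; the reconciliation succeeds because an overlap of interiors is always full-dimensional and so can never reduce to a single shared point. Note that distinct Euclidean cells meet only along edges or vertices and have disjoint interiors, so for them the hypothesis of (N4) is false and the implication holds vacuously; the content of (N4) appears only for degenerate norms whose bisectors are full-dimensional. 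Assembling (N0)--(N4) then completes the proof.
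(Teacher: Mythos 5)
Your verification is correct, and it diverges from the paper's proof precisely at the one axiom that carries content. The paper dismisses (N0)--(N3) as "easily verified" and justifies (N4) by asserting that the interiors of distinct Vorono\" i regions are always disjoint, so that the hypothesis $\Int V_p\cap\Int V_q\neq\emptyset$ never fires for $V_p\neq V_q$ and the axiom holds vacuously. You instead argue directly: a nonempty intersection of interiors is a nonempty open set, hence infinite, hence the regions share more than one point. Your route is not just a stylistic variant --- it is strictly more robust, because the theorem is stated for an arbitrary finite-dimensional normed space, and when the norm is not strictly convex (e.g.\ the $\ell^\infty$ or $\ell^1$ norm in the plane) the bisector of two sites can contain a full-dimensional piece, so two \emph{distinct} regions can have overlapping interiors; the paper's vacuousness claim fails in that generality while your argument still closes the case. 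You also supply the supporting fact, which the paper leaves implicit, that each region contains a ball about its site (needed for $X\sn V_p$ in (N0)), and you make explicit the extension of $\sn$ to arbitrary subsets that (N3) silently requires. The one item the paper touches that you omit is the pair of auxiliary relations (N5)--(N6) that the paper "assumes" for every strong proximity; on the domain $\mathscr{V}(S)$ these are unproblematic and the paper's own treatment of them is a single unconvincing sentence, but be aware that your extension of the relation to all subsets via "share more than one point" would violate (N5) literally, since a singleton $\{x\}$ can never share two points with anything --- a defect of the relation itself rather than of your verification of (N0)--(N4).
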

\begin{proof}
Axioms $N0)$ through $N3)$ are easily verified. Axiom $N4)$ holds, since the intersection of the interiors is always empty. That is,  
\[
V_p\ \notsn\ V_q\ \Rightarrow\ \Int V_p\cap \Int V_q = \emptyset, V_p \neq V_q.
\]
Axiom $N5)-N6)$ hold because there are no points in common among the interiors of the Vorono\" i regions.
\end{proof}

Theorem~\ref{thm:VoronoiRegions} is illustrated in Example~\ref{ex:VoronoiRegions}.

\begin{figure}[!ht]
\begin{center}
\includegraphics[width=35mm]{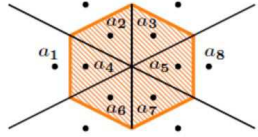}
\end{center}
\caption[]{Vorono\" i Regions $V_{a_i}, i\in \left\{1,2,3,4,5,6,7,8\right\}$}
\label{fig:PartialVoronoiDiagram}
\end{figure} 

\begin{example}\label{ex:VoronoiRegions}
Let $X$ be a space covered with a Vorono\"i diagram $\mathscr{V}(S)$, $S$, a set of sites.
A partial view of $\mathscr{V}(S)$ is shown in Fig.~\ref{fig:PartialVoronoiDiagram}, where 
\begin{align*}
V_{a_i} \in \mathscr{V}(S), i &\in \left\{1,2,3,4,5,6,7,8\right\}.
\end {align*} 
From Theorem~\ref{thm:VoronoiRegions}, observe that
\[
V_{a_4}\ \sn\ V_{a_2},\ V_{a_4}\ \sn\ V_{a_6}\ \mbox{and}\ V_{a_4}\ \notsn\ V_{a_5},\ V_{a_2}\ \notsn\ V_{a_5},
\ V_{a_6}\ \notsn\ V_{a_5},
\]
since $\left\{V_{a_2},V_{a_4}\right\},\left\{V_{a_4},V_{a_6}\right\}$ have a common edge.  Further, $V_{a_2}, V_{a_4}, V_{a_6}$ are not strongly near $V_{a_5}$. $V_{a_2}, V_{a_4}, V_{a_6}$ share only one point with $V_{a_5}$.   Similarly, 
\[
V_{a_5}\ \sn\ V_{a_3}, V_{a_5}\ \sn\ V_{a_7}, V_{a_5}\ \sn\ V_{a_8},
\]
since, taken pairwise, these Vorono\" i regions have a common edge.   There are also Vorono\" i regions in Fig.~\ref{fig:PartialVoronoiDiagram} that are near but not strongly near, {\em e.g.}, $V_{a_3}\ \notsn\ V_{a_6}, V_{a_7}\ \notsn\ V_{a_2}$.  
\qquad \textcolor{blue}{$\blacksquare$}
\end{example}

From Theorem~\ref{thm:VoronoiRegions}, we can define a strongly hit and miss hypertopology, $\tau^\doublewedge$, on the space of Vorono\" i regions generated by $S$, $\mathscr{V}(S)$, to which we add the empty set, \cite{PetersGuadagni2015stronglyNear}. The hypertopology $\tau^\doublewedge$ has as subbase the elements of the following form:\\
\begin{itemize}
\item $ \Int(V_p)^\doublewedge = \{ V_q \in \mathscr{V}(S): V_q \ \sn \  \Int(V_p)\} = \{ V_q \in \mathscr{V}(S): V_q \sn V_p \}$,
\item $\quad V_s^{+}= \{ V_q \in \mathscr{V}(S): V_q \cap V_s = \emptyset \}$,
\end{itemize}
\noindent where Vorono\" i regions $V_p, V_s \in \mathscr{V}(S)$.

\begin{theorem}
Let $(X, \norm{\cdot})$ be a finite-dimensional normed vector space and $S$ a collection of points in $X$. For any $p \in S$ let $\{a_i\}_{i \in I}$ the family of points in $S$ such that $V_{a_i} \sn V_p$, and $\{b_j\}_{j \in J} $ the family of points in $S$ such that $V_{b_j} \cap V_p = \emptyset$. Then $\mathscr{A}= (\bigcap_{i=1}^n \Int(V_{a_i})^\doublewedge) \cap (\bigcap_{j=1}^m V_{b_j}^+)$ is the smallest open set in $\tau^\doublewedge$ containing $V_p$.
\end{theorem}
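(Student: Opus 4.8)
The plan is to verify two things: that $\mathscr{A}$ is an open set containing $V_p$, and that $\mathscr{A}$ is contained in every open neighbourhood of $V_p$, so that it is the smallest such set. The first part is immediate from the definitions. Since $\mathscr{A}$ is a \emph{finite} intersection of subbasic sets of the forms $\Int(V_{a_i})^\doublewedge$ and $V_{b_j}^+$ --- finite because, $S$ being a finite point set, each Vorono\" i region $V_p$ is a convex polygon with finitely many edges, hence has only finitely many neighbours sharing an edge and only finitely many regions disjoint from it --- it is a basic open set in $\tau^\doublewedge$, and in particular open. To see $V_p \in \mathscr{A}$, note that $V_p \in \Int(V_{a_i})^\doublewedge$ for every $i$ because $V_{a_i}\ \sn\ V_p$ and $\sn$ is symmetric (axiom $N1$), while $V_p \in V_{b_j}^+$ for every $j$ because $V_{b_j} \cap V_p = \emptyset$ by the very definition of the family $\{b_j\}$.

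For minimality, I would let $\mathscr{O}$ be an arbitrary open set of $\tau^\doublewedge$ with $V_p \in \mathscr{O}$ and reduce to the basic case: since the finite intersections of subbasic sets form a base, there is a basic open set
\[
\mathscr{W} = \Bigl(\bigcap_{k} \Int(V_{c_k})^\doublewedge\Bigr) \cap \Bigl(\bigcap_{l} V_{d_l}^+\Bigr)
\]
with $V_p \in \mathscr{W} \subseteq \mathscr{O}$, so it suffices to prove $\mathscr{A} \subseteq \mathscr{W}$. The key observation is that the condition $V_p \in \mathscr{W}$ forces every subbasic factor of $\mathscr{W}$ to coincide with a factor already present in $\mathscr{A}$: from $V_p \in \Int(V_{c_k})^\doublewedge$ we obtain $V_p\ \sn\ V_{c_k}$, so $V_{c_k}$ lies among the neighbours $\{V_{a_i}\}$; and from $V_p \in V_{d_l}^+$ we obtain $V_{d_l} \cap V_p = \emptyset$, so $V_{d_l}$ lies among the disjoint regions $\{V_{b_j}\}$.

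It then remains only to chase an arbitrary $V_r \in \mathscr{A}$ through these inclusions. By definition of $\mathscr{A}$ we have $V_r\ \sn\ V_{a_i}$ for all $i$ and $V_r \cap V_{b_j} = \emptyset$ for all $j$; specialising to those $a_i$ equal to some $c_k$ and those $b_j$ equal to some $d_l$ yields $V_r \in \Int(V_{c_k})^\doublewedge$ for all $k$ and $V_r \in V_{d_l}^+$ for all $l$, that is, $V_r \in \mathscr{W}$. Hence $\mathscr{A} \subseteq \mathscr{W} \subseteq \mathscr{O}$, and as $\mathscr{O}$ was an arbitrary open neighbourhood of $V_p$, the set $\mathscr{A}$ is indeed the smallest open set in $\tau^\doublewedge$ containing $V_p$.

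I expect no deep obstacle here: the heart of the matter is that $\mathscr{A}$ already imposes every available hit-constraint (from all neighbours) and miss-constraint (from all disjoint regions) compatible with $V_p$, so any basic neighbourhood, employing only a sub-collection of these constraints, is necessarily larger. The only points requiring genuine care are the finiteness ensuring $\mathscr{A}$ is a legitimate basic open set, and the bookkeeping matching each factor of an arbitrary $\mathscr{W}$ to a factor of $\mathscr{A}$; one should also note the harmless degenerate possibility that $V_p$ itself occurs as some $V_{a_i}$, since $V_p\ \sn\ V_p$, which affects none of the above.
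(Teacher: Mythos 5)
Your proposal is correct and rests on the same key observation as the paper's proof: any basic $\tau^\doublewedge$-neighbourhood of $V_p$ can only impose hit-constraints $\Int(V_{c_k})^\doublewedge$ with $V_p\ \sn\ V_{c_k}$ and miss-constraints $V_{d_l}^{+}$ with $V_p\cap V_{d_l}=\emptyset$, all of which already appear among the factors of $\mathscr{A}$. The only difference is one of framing --- you show directly that $\mathscr{A}$ is contained in every open neighbourhood of $V_p$, while the paper shows that no open set strictly between $V_p$ and $\mathscr{A}$ exists --- and your added remarks on the finiteness of the index families and on $V_p\ \sn\ V_p$ are harmless refinements of the same argument.
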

\begin{proof}
Suppose that $\mathscr{B}$ is an open set in $\tau^\doublewedge$ such that $V_p \in \mathscr{B} \subseteq \mathscr{A}$. Then $\mathscr{B} = \mathscr{A} \cap (\bigcap_{k=1}^r \Int(V_{c_k})^\doublewedge) \cap (\bigcap_{h=1}^s V_{d_h}^+)$, where $c_1,..,c_r, d_1,.., d_s \in S$. It means that $V_p \sn V_{c_k}$ for each $k=1,..,r$ and $V_p \cap V_{d_h}= \emptyset$ for each $h=1,..,s$. So, by the hypothesis, we have that each $c_k$ has to coincide with some point in $\{a_i\}_{i \in I}$ and each $d_h$ has to coincide with some point in $\{b_j\}_{j \in J}$. That is $\mathscr{B} = \mathscr{A}$.
\end{proof}

\begin{figure}[!ht]
\begin{center}
\includegraphics[width=35mm]{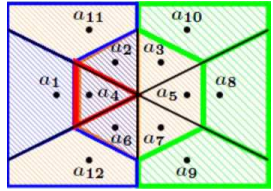}
\end{center}
\caption[]{Smallest open set containing Voron\" i region of a4}
\label{fig:VoronoiDiagram2}
\end{figure}

\begin{figure}[!ht]
\begin{center}
\includegraphics[width=50mm]{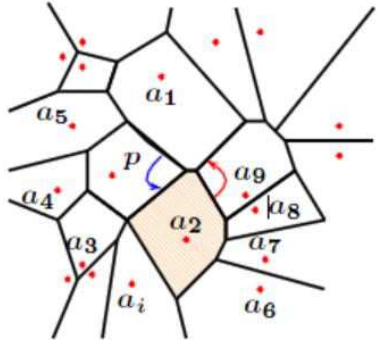}
\end{center}
\caption{Vorono\" i diagram with respect to Theorem 6.7}
\label{fig:VoronoiDiagram3}
\end{figure}

\begin{example}
Consider the situation in Fig.~\ref{fig:VoronoiDiagram2}. Take, for example, the Vorono\" i region $V_{a_4}$. The smallest open set in $\tau^\doublewedge$ containing $V_{a_4}$ is given by 
\[
\mathscr{A} = (\bigcap_{i=1,2,4,6} \Int(V_{a_i})^\doublewedge) \cap (\bigcap_{q=8,9,10} V_{q}^+). \mbox{\qquad \textcolor{blue}{$\blacksquare$}}
\]   
\end{example}

\begin{theorem}\label{region}
Let $(X, \norm{\cdot})$ be a finite-dimensional normed vector space and $S$ a collection of points in $X$. If $p \in S$ and $\mathscr{A}$ is the smallest open set containing $V_p$, then $\mathscr{A}$ cannot contain any other region in $\mathscr{V}(S)$.
\end{theorem}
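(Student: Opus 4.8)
The plan is to argue by contradiction, exploiting the explicit description of $\mathscr{A}$ furnished by the preceding theorem. First I would record that $V_r \in \mathscr{A}$ unwinds into the two families of conditions $V_r\ \sn\ V_{a_i}$ for every $i$ and $V_r \cap V_{b_j} = \emptyset$ for every $j$, where the $a_i$ are exactly the sites with $V_{a_i}\ \sn\ V_p$ and the $b_j$ the sites with $V_{b_j} \cap V_p = \emptyset$. The key preliminary observation is that $p$ itself occurs among the $a_i$: since $\Int V_p \cap \Int V_p = \Int V_p \neq \emptyset$, axiom $(N4)$ gives $V_p\ \sn\ V_p$, so $p \in \{a_i\}_{i\in I}$ and hence $\mathscr{A} \subseteq \Int(V_p)^\doublewedge$.

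Now suppose, for contradiction, that some $V_r \in \mathscr{A}$ with $r \neq p$. Applying the observation above, $V_r\ \sn\ V_p$, so by the definition of the Vorono\" i strong proximity $V_p$ and $V_r$ share more than one point; that is, they meet along a common edge and are adjacent cells. The crux is then to exhibit a single region that separates $V_r$ from $V_p$ in the subbase. I would locate a site $q \in S$ whose cell $V_q$ is adjacent to $V_r$ but disjoint from $V_p$ --- concretely, a neighbour of $V_r$ lying on the far side of $V_r$ from the shared $V_p$--$V_r$ edge. For such a $q$ we have $V_q \cap V_p = \emptyset$, so $q \in \{b_j\}_{j\in J}$, and then membership $V_r \in V_q^{+}$ would force $V_r \cap V_q = \emptyset$; but adjacency gives $V_r\ \sn\ V_q$, whence $V_r \cap V_q \neq \emptyset$ by $(N2)$, a contradiction. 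Therefore no region other than $V_p$ can lie in $\mathscr{A}$. Symmetrically, one may instead produce a neighbour $V_{a_i}$ of $V_p$ that is not adjacent to $V_r$, so that $V_r\ \notsn\ V_{a_i}$ violates $V_r \in \Int(V_{a_i})^\doublewedge$.

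The main obstacle is the geometric existence claim in the last step: guaranteeing a neighbour of $V_r$ that misses $V_p$ entirely, not merely sharing a boundary vertex with it. This is where the structure of a genuine planar Vorono\" i tessellation must be used --- the cells tile the space and $V_r$, being distinct from $V_p$, extends beyond the common edge, so the cell across one of the remaining edges of $V_r$ cannot touch $V_p$. I would make this precise by traversing $\partial V_r$ starting from the shared edge and selecting the first edge whose opposite cell is screened from $V_p$ by $V_r$ itself; convexity of the cells (Lemma~\ref{lem:convexity}) ensures such an edge exists once the configuration is rich enough that $V_r$ is not simply the complementary cell of $V_p$. This non-degeneracy is the only delicate point, and it is exactly what excludes the trivial two- or three-site diagrams in which $\mathscr{A}$ would otherwise capture a second cell.
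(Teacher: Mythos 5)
Your reduction of membership in $\mathscr{A}$ to the two families of conditions is correct, and the preliminary observation that $p$ itself occurs among the $a_i$ (so that any $V_r\in\mathscr{A}$ must satisfy $V_r\ \sn\ V_p$) is sound and consistent with the paper's own Example of the smallest open set, where the index of the central region appears in its own intersection. The gap is in the crux you yourself identify: the existence of the separating witness. Your primary route --- a site $q$ with $V_q$ adjacent to $V_r$ but \emph{disjoint} from $V_p$ --- can fail even in non-degenerate configurations where the theorem is true. Take four sites at the corners of a square: every cell meets $V_p$ at least in the central Vorono\" i vertex, so the family $\{b_j\}$ is empty and no such $q$ exists. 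Note that membership in $\{b_j\}$ requires $V_{b_j}\cap V_p=\emptyset$ outright, not merely failure of strong nearness, so cells touching $V_p$ in a single vertex are unusable as witnesses; your remark about ``not merely sharing a boundary vertex'' acknowledges the issue but the proposed boundary-traversal of $\partial V_r$ does not produce the required disjoint cell in such cases. The claim ``the cell across one of the remaining edges of $V_r$ cannot touch $V_p$'' is simply false there.

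The route that actually carries the theorem is the one you relegate to a single closing sentence: exhibiting a neighbour $V_{a_i}$ of $V_p$ that is \emph{not} strongly near $V_r$, so that $V_r\notin \Int(V_{a_i})^{\doublewedge}$. This is essentially what the paper does, but globally rather than by a single witness: it orders the edge-neighbours $V_{a_1},\dots,V_{a_n}$ cyclically around $V_p$ and argues, using convexity of every cell (Lemma~\ref{lem:convexity}), that a region strongly near all of them must occupy the same convex angles cut out by consecutive pairs $H_{pa_i}\cap H_{pa_{i+1}}$ as $V_p$ does, forcing $V_q=V_p$. You give no argument for the existence of the non-adjacent $V_{a_i}$, so as written the proof does not close. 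To your credit, you correctly flag the degenerate configurations (two sites, three sites in general position) in which the statement genuinely fails; the paper's proof silently excludes these through its unproved assumption that the neighbours of $V_p$ form a closed cycle, so the non-degeneracy hypothesis you isolate is a real omission in the original as well --- but isolating it is not the same as supplying the convexity argument that the non-degenerate case still requires.
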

\begin{proof}
Let $\mathscr{A}$ be $\mathscr{A}= (\bigcap_{i=1}^n \Int(V_{a_i})^\doublewedge) \cap (\bigcap_{j=1}^m V_{b_j}^+)$ and suppose it is the smallest open set containing the Vorono\" i region, $V_p$, of a point $p$ . If there is another region $V_q \in \mathscr{A}$, then $V_q \sn V_{a_i}$ for all $i=1,..,n$. Suppose $V_{a_i}$ are indexed in such a way that each $V_{a_i}$ has non-empty intersection with the next one $V_{a_{i+1}}$ for $i=1,.., n-1$, and $V_n$ has non-empty intersection with $V_{a_1}$. This is possible because they define $V_p$.  Consider now $V_{a_1}$ and $V_{a_2}$. Since $V_p$ is convex, $V_{a_1}$ and $V_{a_2}$ have to form a convex angle, and because also $V_{a_1}$ and $V_{a_2}$ are convex, they can intersect at most in an edge. But also $V_q$ is convex and it is delimited by $V_{a_1}$ and $V_{a_2}$. So either $V_q$ has the same convex angle as $V_p$, or it can have a different convex angle situated on the opposite side, that is outside $H_{pa_1} \cap H_{pa_2}$, intersection of half planes (see, {\em e.g.}, Fig.~\ref{fig:VoronoiDiagram3}). Suppose it is verified this last situation. We know also that $V_{a_3}$ delimits $V_q$. By the last supposition it would mean that we should take the convex angle formed by $V_{a_2}$ and $V_{a_3}$ situated outside $H_{pa_2}\cap H_{pa_3}$. By continuing in this way for all the points $a_1,.., a_n$ we obtain an absurd by the convexity of all regions. So we have to consider necessarily the same convex angles as $V_p$ and we obtain that $V_q=V_p$.

\end{proof}

\section{Proximal Vorono\" i Manifolds, Atlases and Charts}
Let $\mathcal{M}$ be a manifold, that is a topological space which is Hausdorff, second countable, locally Euclidean of dimension $n$. This means that for each point there is a neighbourhood $U$ of $\mathcal{M}$ with a homeomorphism $\varphi: U\longrightarrow \hat{U}= \varphi(U)\subseteq\mathbb{R}^n$.  $\mathcal{M}$ is a Vorono\" i manifold, provided $\varphi(U)$ is a Vorono\" i diagram.  The pair $(U,\varphi)$ is called a \emph{Vorono\" i chart} on $\mathcal{M}$.  The collection $\mathcal{A}$ of all Vorono\" i charts on $\mathcal{M}$ is called a \emph{Vorono\" i atlas}.

Let $\mathcal{M}_1,\mathcal{M}_2$ be Vorono\" i manifolds and let $S_1\subset \mathcal{M}_1,S_2\subset \mathcal{M}_2$ be nhbds of points in $\mathcal{M}_1$ and $\mathcal{M}_2$ respectively, $\varphi, \psi$ homeomorphisms from $ S_1, S_2$ to subsets $\varphi(S_1), \psi(S_2)\subseteq\mathbb{R}^n$ such that $\varphi(S_1), \psi(S_2)$ are Vorono\" i diagrams.     From what has been observed about manifolds, we make the following observations. Define \\
\[ 
\mathcal{M}_1\ \sn\ \mathcal{M}_2\Leftrightarrow \exists (S_1, \varphi), (S_2, \psi): \varphi(S_1)\ \sn\ \psi(S_2) \hbox{ in the sense of thm. \ref{thm:VoronoiRegions}}.
\]

\begin{example}\label{ex:nearCharts}
Let $\mathcal{M}_1,\mathcal{M}_2$ be Vorono\" i manifolds in the plane.  From Fig.~\ref{fig:VoronoiDiagram3}, let
\begin{align*}
\mathcal{M}_1 &= \text{the portion of the plane containing the regions associated with}\ a_1,a_3,a_4,a_5,p\\
\mathcal{M}_2 &= \text{the portion of the plane containing the regions associated with}\ a_2,a_6,a_{7},a_{8},a_{9}\\
S_1 &= \text{the interior of the portion of the plane containing the regions associated with}\ p, a_1 \\
S_2  &= \text{the interior of the portion of the plane containing the regions associated with}\ a_2, a_8\\
\varphi(S_1) &= \mathscr{V}(S_1)\ \mbox{(Vorono\" i diagram)}, V_p\in \mathscr{V}(S_1).\\
\psi(S_2) &= \mathscr{V}(S_2)\ \mbox{(Vorono\" i diagram)}, V_{a_2}\in \mathscr{V}(S_1).
\end{align*}
In this simple case, the homeomorphisms correspond to the identity map. In Fig.~\ref{fig:VoronoiDiagram3}, $\mathscr{V}(S_1),\mathscr{V}(S_2)$ share the edge between Vorono\" i regions $V_p$ and $V_{a_2}$.  Hence, 
$\varphi(S_1)\ \sn\ \psi(S_2)$. So $\mathcal{M}_1\ \sn\ \mathcal{M}_2$. \qquad \textcolor{blue}{$\blacksquare$} 
\end{example}
In terms of descriptively near manifolds $\mathcal{M}_1,\mathcal{M}_2, S_1\subset \mathcal{M}_1, S_2\subset \mathcal{M}_2$ with corresponding descriptively near charts $(S_1,\varphi),(S_2,\psi)$, we have
\[
\mathcal{M}_1\ \snd\ \mathcal{M}_2\Leftrightarrow \exists (S_1, \varphi), (S_2, \psi): \varphi(S_1)\ \snd\ \psi(S_2), \hbox{ in the sense of page \pageref{descrsn}}
\]
where $\Phi: \mathcal{M}_1 \cup \mathcal{M}_2 \rightarrow \mathbb{R}^n$.

\begin{example}\label{ex:descriptivelyConnectedVoronoiManifolds}
Continuing Example~\ref{ex:nearCharts}, assume
\begin{align*}
x &\in \varphi(S_1), y \in \psi(S_2).\\
\Phi(x) &= (\mbox{colour of}\ x, \theta_x\ \mbox{gradient angle}),\ \mbox{feature vector for}\ x.\\
\Phi(y) &= (\mbox{colour of}\ y, \theta_y\ \mbox{gradient angle}),\ \mbox{feature vector for}\ y.
\end{align*}
Assume $x,y$ have matching feature vectors, then
\[
\mathcal{M}_1\ \snd\ \mathcal{M}_2  \Leftrightarrow \exists (S_1, \varphi), (S_2, \psi): \Phi(\varphi(S_1))\ \sn\ \Phi(\psi(S_2)). \mbox{\qquad \textcolor{blue}{$\blacksquare$}}
\]
\end{example}

Let $\mathcal{A}_1 = \left\{(U_i,\varphi_i): i\in\mathbb{N}^+\right\},\mathcal{A}_2 = \left\{(V_j,\psi_j): j\in\mathbb{N}^+\right\}$ be atlases on smooth manifolds $\mathcal{M}_1,\mathcal{M}_2$, respectively, $\hat{U}_i= \varphi_i(U_i), \hat{V}_j= \psi_j(V_j)$, and define the \emph{descriptive intersection of the disjoint charts} by
\[
\hat{U_i}\ \mathop{\cap}\limits_{\Phi,\hat{ \mathcal{A}}} \ \hat{V_j} = \left\{ x \in \hat{U}_i \cup \hat{V}_j: \Phi(x)\in \Phi(\hat{U}_i) , \ \Phi(x) \in \Phi(\hat{V_j}) \right\}.
\]

\noindent Then define the relation $\sndatlases$  on $\mathcal{A}_1\times\mathcal{A}_2$ by
\[
U_i\ \sndatlases\ V_j \Leftrightarrow \hat{U}_i\ \mathop{\cap}\limits_{\Phi, \hat {\mathcal{A}}}\ \hat{U}_j\neq\emptyset.
\]

\begin{theorem}\label{thm:nearAtlases}
Let $\mathcal{A}_1 = \left\{(U_i,\varphi_i): i\in\mathbb{N}^+\right\},\mathcal{A}_2 = \left\{(V_j,\psi_j): j\in\mathbb{N}^+\right\}$ be atlases on smooth manifolds $\mathcal{M}_1,\mathcal{M}_2$, respectively.   Then 
\begin{enumerate}
\item $\hat{U}_i\ \snd\ \hat{V}_j \Rightarrow U_i \sndatlases V_j$

\item $\mathcal{M}_1\ \snd\ \mathcal{M}_2$  $\Rightarrow \exists (U_i, \varphi_i)\in \mathcal{A}_1, (V_j, \psi_j) \in \mathcal{A}_2 : U_i \sndatlases V_j$
\end{enumerate}
\end{theorem}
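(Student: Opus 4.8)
The plan is to treat the two implications separately, deriving both from axiom $(N2)$ of the strong proximity $\sn$ together with the covering property built into the notion of an atlas.

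For part (1), I would first unfold the definition of descriptive strong nearness from Remark~\ref{descrsn}: the hypothesis $\hat{U}_i\ \snd\ \hat{V}_j$ says precisely that $\Phi(\hat{U}_i)\ \sn\ \Phi(\hat{V}_j)$. Axiom $(N2)$ then forces $\Phi(\hat{U}_i)\cap\Phi(\hat{V}_j)\neq\emptyset$, so I may pick a common description value $z\in\Phi(\hat{U}_i)\cap\Phi(\hat{V}_j)$. Choosing any $x\in\hat{U}_i$ with $\Phi(x)=z$, the point $x$ lies in $\hat{U}_i\cup\hat{V}_j$ and satisfies both $\Phi(x)\in\Phi(\hat{U}_i)$ and $\Phi(x)\in\Phi(\hat{V}_j)$; hence $x$ belongs to the descriptive intersection $\hat{U}_i\mathop{\cap}\limits_{\Phi,\hat{\mathcal{A}}}\hat{V}_j$, which is therefore nonempty. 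By the defining equivalence of $\sndatlases$ this gives $U_i\ \sndatlases\ V_j$, as required.

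For part (2), I would start from the definition of $\mathcal{M}_1\ \snd\ \mathcal{M}_2$ given just before Example~\ref{ex:descriptivelyConnectedVoronoiManifolds}, which supplies witnessing charts $(S_1,\varphi),(S_2,\psi)$ with $\varphi(S_1)\ \snd\ \psi(S_2)$, that is, $\Phi(\varphi(S_1))\ \sn\ \Phi(\psi(S_2))$. As in part (1), axiom $(N2)$ yields a common value $z$ in $\Phi(\varphi(S_1))\cap\Phi(\psi(S_2))$, realised by points $s_1\in S_1\subseteq\mathcal{M}_1$ and $s_2\in S_2\subseteq\mathcal{M}_2$. Because $\mathcal{A}_1$ and $\mathcal{A}_2$ are atlases, their chart domains cover $\mathcal{M}_1$ and $\mathcal{M}_2$, so there exist charts $(U_i,\varphi_i)\in\mathcal{A}_1$ and $(V_j,\psi_j)\in\mathcal{A}_2$ with $s_1\in U_i$ and $s_2\in V_j$. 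I would then verify that $z$ survives as a common description value on these charts, i.e.\ $z\in\Phi(\hat{U}_i)\cap\Phi(\hat{V}_j)$, and conclude exactly as in part (1) that $\hat{U}_i\mathop{\cap}\limits_{\Phi,\hat{\mathcal{A}}}\hat{V}_j\neq\emptyset$, hence $U_i\ \sndatlases\ V_j$.

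The main obstacle I anticipate lies in this last verification step of part (2): one must check that the common description value $z$ witnessed by $(S_1,\varphi),(S_2,\psi)$ is genuinely inherited by the atlas charts $(U_i,\varphi_i),(V_j,\psi_j)$ containing $s_1$ and $s_2$. This requires that the description $\Phi$ be well defined and consistent under the transition maps relating $\varphi$ to $\varphi_i$ on $S_1\cap U_i$, and $\psi$ to $\psi_j$ on $S_2\cap V_j$, so that $\Phi(\varphi_i(s_1))=\Phi(\varphi(s_1))=z=\Phi(\psi(s_2))=\Phi(\psi_j(s_2))$. Absent such compatibility the point $z$ need not lie in $\Phi(\hat{U}_i)\cap\Phi(\hat{V}_j)$, so I would make explicit the standing assumption that the descriptive value is chart-independent (depending only on the underlying manifold point), after which the reduction to part (1) is routine.
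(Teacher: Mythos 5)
Your part (1) coincides with the paper's proof: unfold Remark~\ref{descrsn}, apply axiom (N2) to get $\Phi(\hat{U}_i)\cap\Phi(\hat{V}_j)\neq\emptyset$, and read off nonemptiness of the descriptive intersection, hence $U_i\ \sndatlases\ V_j$. For part (2) the paper takes a shorter route than you do: it construes the definition of $\mathcal{M}_1\ \snd\ \mathcal{M}_2$ as directly supplying witnessing charts that already belong to the atlases $\mathcal{A}_1$ and $\mathcal{A}_2$, so the conclusion is an immediate application of part (1), with no covering argument and no transport of a common description value. Your longer route --- allowing the witnesses $(S_1,\varphi),(S_2,\psi)$ to be arbitrary charts, locating atlas charts through the points $s_1,s_2$ via the covering property, and then carrying the common value $z$ over to $\Phi(\hat{U}_i)\cap\Phi(\hat{V}_j)$ --- responds to a genuine ambiguity in that definition, and you are right that the transfer step only goes through if the description is chart-independent, i.e.\ depends only on the underlying manifold point; this is what the declared signature $\Phi:\mathcal{M}_1\cup\mathcal{M}_2\rightarrow\mathbb{R}^n$ suggests, even though $\Phi$ is subsequently applied to chart images in $\mathbb{R}^n$, so the compatibility hypothesis you make explicit is doing real work. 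In short: your argument is correct modulo that flagged assumption and is robust to the choice of witnessing charts, whereas the paper's argument is briefer because it silently restricts the existential quantifier in the definition of $\mathcal{M}_1\ \snd\ \mathcal{M}_2$ to the given atlases.
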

\begin{proof}$\mbox{}$\\
$(1)$: Suppose $\hat{U}_i\ \snd\ \hat{V}_j$. By definition of descriptive strong nearness (Remark~\ref{descrsn}), we have $\Phi(\hat{U}_i) \sn \Phi(\hat{V}_j)$. So $\Phi(\hat{U}_i) \cap \Phi(\hat{V}_j) \neq \emptyset$. This means that $\hat{U}_i\ \mathop{\cap}\limits_{\Phi, \hat {\mathcal{A}}}\ \hat{U}_j\neq\emptyset$. Hence $U_i \sndatlases V_j$. \\
$(2)$: We know that $\mathcal{M}_1\ \snd\ \mathcal{M}_2$ . So there exist $(U_i, \varphi_i) \in \mathcal{A}_1, (V_j, \psi_j) \in \mathcal{A}_2:  \varphi(U_i)\ \sn\ \psi(V_j) $. Hence, from $(1)$, we have that there exist $(U_i, \varphi_i) \in \mathcal{A}_1, (V_j, \psi_j) \in \mathcal{A}_2$ such that $U_i \sndatlases V_j   $.
\end{proof}

\begin{remark}
Observe that the converse of $(1)$ is not in general true. In fact we could have $\Phi(\hat{U}_i) \cap \Phi(\hat{V}_j) \neq \emptyset$ but $\Phi(\hat{U}_i) \not{\sn} \Phi(\hat{V}_j) $. This would mean $U_i \sndatlases V_j$ but $\hat{U}_i\ \notsnd\ \hat{V}_j$.
\end{remark}


\end{document}